\newtheorem{theorem}{Theorem}[section]
\newtheorem{lemma}{Lemma}[section]
\newtheorem{corollary}{Corollary}[section]
\theoremstyle{definition}
\newcommand{\supp}{\mbox{supp}}
\newcommand{\rst}[1]{\ensuremath{{\mathbin\upharpoonright}\raise-.5ex\hbox{$#1$}}}  
\newcommand{\bbf}{\mathbb{F}}
\newcommand{\bbn}{\mathbb{N}}
\newcommand*{\bbe}{\ifinner\mathbb{E}\else\mathop{\vcenter{\hbox{\huge$\mathbb{E}$}}}\fi }
\newcommand{\bbz}{\mathbb{Z}}
\newcommand{\abs}[1]{\left\lvert #1\right\rvert}
\newcommand{\brac}[1]{\left( #1\right)}
\newcommand{\norm}[1]{\left\lVert #1\right\rVert}
\newcommand{\bbc}{\mathbb{C}}
\numberwithin{equation}{section}
\begin{document}
 \title{Translation invariant equations and the method of Sanders}
\author{Thomas F. Bloom}
\date{\today}
\address{Thomas Bloom\\Department of Mathematics\\University of Bristol\\
University Walk\\Clifton\\ Bristol BS8 1TW\\United Kingdom}
\email{matfb@bristol.ac.uk}
\thanks{The author is supported by an EPSRC doctoral training grant.}
\begin{abstract}
We extend the recent improvement of Roth's theorem on three term arithmetic progressions by Sanders to obtain similar results for the problem of locating non-trivial solutions to translation invariant linear equations in many variables in both $\bbz/N\bbz$ and $\bbf_q[t]$. 
\end{abstract}
\maketitle
\section{Introduction}
This paper concerns solutions to a linear equation in $s\geq 3$ variables,
\begin{equation}\label{main}
 c_1x_1+\cdots+c_sx_s=0\,\,\,\,\,\,\,c_i\neq0,
\end{equation}
working inside some fixed ring $R$, and we restrict our attention to those equations with coefficients satisfying $c_1+\cdots+c_s=0$. These are often referred to as translation invariant systems, since it follows that if $(x_1,\hdots, x_s)\in R^s$ is a solution then so is $(x_1+x,\hdots,x_s+x)$ for any $x\in R$. Following Ruzsa \cite{ruzsa} we define the {\em genus} of an equation of the shape \eqref{main} to be the largest number $m$ with the property that there is a partition of $\{1,\hdots,s\}$ into $m$ disjoint nonempty sets $T_j$ where $\sum_{i\in T_j}c_i=0$. Note that the genus is well defined and positive, as a consequence of translation invariance. Given an equation \eqref{main} of genus $m$ and a finite set $A$ one obtains $\lvert A\rvert^m$ solutions $\mathbf{x}\in A^s$ by setting $x_i=x_{i'}$ whenever $i,i'\in T_j$. We call such solutions trivial, and seek an upper bound on the size of sets which contain only trivial solutions to \eqref{main}. In this paper we generalise a recent result of Sanders which gives the best known bound in the case with $s=3$ and $R=\bbz/N\bbz$ to obtain quantitative results of comparable quality for any $s\geq 3$ in circumstances where $R$ is either $\bbz/N\bbz$ or the polynomial ring $\bbf_q[t]$. Despite the analogies between these rings having been well-explored throughout most of number theory, the problems of additive combinatorics have been little studied in the $\bbf_q[t]$ setting, and we hope that this paper will encourage others to obtain further results of this nature in polynomial rings. We also wish to promote the philosophy that $\bbf_q[t]$ is a useful model case for such problems, acting as a `halfway house' between the currently popular finite field model case $\bbf_p^N$ and $\bbz/N\bbz$; technically simpler than the latter but capturing more of the interesting behaviour of the integer case than the former.

The problem of finding large sets with no non-trivial solutions to \eqref{main} may be posed in any ring (or indeed any module), but since the work of Roth (see \cite{roth,roth2}) it has received most attention in the integers $\bbz$, or rather finite truncations $\{1,\hdots, N\}$ that may conveniently be viewed as the cyclic group $\bbz/N\bbz$. In the special case $s=3$ and $(c_1,c_2,c_3)=(1,-2,1)$ this is equivalent to finding a three term arithmetic progression $a,a+d,a+2d$, with non-trivial solutions satisfying $d\neq0$. If $s\geq 3$ and $\mathbf{c}\in(\bbz/N\bbz)^s$ is such that \eqref{main} has genus $m\geq 1$ then we define $r(N)=r_{s,\mathbf{c}}(N)$ to be the cardinality of the largest subset of $\bbz/N\bbz$ which contains no non-trivial solutions to \eqref{main}. Roth showed that $r(N)\ll_{s,\mathbf{c}} N/\log\log N$. This was improved for $s=3$ by Heath-Brown \cite{heath-brown} and Szemer\'{e}di \cite{szemeredi} to $r(N)\ll N/\log^c N$ for some absolute constant $c>0$, the value of which was subsequently improved by Bourgain (see \cite{bourgain1,bourgain2}), first to $c=1/2-o(1)$ and then to $c=2/3-o(1)$. Sanders further improved this to $c=3/4-o(1)$ in \cite{sanders2} before the recent breakthrough result of \cite{sanders1},
\begin{equation}\label{sanders}r(N)\ll N\frac{(\log\log N)^5}{\log N}.\end{equation}

All of the results mentioned above were obtained only for the special case of three term progressions when $\mathbf{c}=(1,-2,1)$, but it is straightforward to generalise their methods for any translation invariant equation of the form \eqref{main} with $s=3$.

We expect that as the number of variables $s$ increases non-trivial solutions to \eqref{main} should become easier to find, and hence we should obtain improved bounds for $r(N)$. The first result of this paper confirms this expectation, generalising the result of Sanders to handle arbitrary $s\geq 3$. 
\begin{theorem}\label{mainthm1}
Let $s\geq 3$, and suppose $\mathbf{c}\in\bbz^s$ is such that the equation \eqref{main} has genus $m\geq 1$. Then
\[r_{s,\mathbf{c}}(N)\ll_{s,\mathbf{c}}N\left(\frac{(\log\log N)^5}{\log N}\right)^{s-2}.\]
\end{theorem}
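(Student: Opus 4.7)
The plan is to import Sanders' density-increment strategy for three-term progressions into the many-variable setting, with the single crucial observation that the extra variables allow one to extract a stronger Fourier coefficient at each step; this is the entire source of the improved exponent $s-2$.

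Working in $\bbz/N\bbz$ and setting $f=1_A$, the number of solutions to \eqref{main} in $A^s$ may be written as
\[ S(A) \;=\; N^{-1}\sum_{\gamma\in\bbz/N\bbz} \prod_{i=1}^s \hat f(c_i\gamma), \]
whose $\gamma=0$ term is the expected value $\alpha^s N^{s-1}$, while the trivial solutions contribute at most $C_{s,\mathbf{c}}|A|^m$. Because each part $T_j$ of the genus-$m$ partition must contain at least two indices (as $c_i\neq 0$), one has $m\leq s/2$, so $s-m\geq 2$; thus whenever $\alpha\gg_{s,\mathbf{c}} N^{-1/2}$---which is far below the target bound---the main term dominates the trivial count, forcing the $\gamma\neq 0$ contributions to have modulus at least $\tfrac{1}{2}\alpha^s N^{s-1}$. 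Bounding $s-2$ of the Fourier factors by $M:=\sup_{\gamma\neq 0}|\hat f(\gamma)|$ and applying Cauchy--Schwarz together with Parseval to the remaining two factors $\hat f(c_1\gamma)\hat f(c_2\gamma)$ gives $M^{s-2}|A|\gg \alpha^s N^{s-1}$, whence
\[ \frac{M}{|A|} \;\gg_{s,\mathbf{c}}\; \alpha^{1/(s-2)}. \]
This is the sole place in the argument where the full strength of $s\geq 3$ is exploited; when $s=3$ the exponent $1/(s-2)$ equals $1$, recovering the input to Sanders' analysis of three-term progressions.

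A non-trivial Fourier coefficient of relative size $\gg\alpha^{1/(s-2)}$ is exactly the input into the Bohr-set density increment machinery of \cite{sanders1}, yielding a regular Bohr set $B$ of controlled rank and radius on which $A$ has relative density at least $\alpha\bigl(1+\Omega(\alpha^{1/(s-2)})\bigr)$. Since the property ``no non-trivial solution to \eqref{main}'' is hereditary, $A\cap B$ still has no non-trivial solutions, so the argument may be iterated inside $B$. Running Sanders' iteration with $\alpha^{1/(s-2)}$ in place of $\alpha$ throughout, and tracking the Bohr set complexity exactly as in \cite{sanders1}, yields
\[ \alpha^{1/(s-2)} \;\ll\; \frac{(\log\log N)^5}{\log N}, \]
which rearranges to the bound claimed in the theorem.

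I expect the main technical obstacle to be the transplantation of the Fourier extraction above from $\bbz/N\bbz$ to the setting of a regular Bohr set $B$ at each iteration: one must set up a relativized count of solutions to \eqref{main} inside $A\cap B$, controlled by Bohr-set-adapted Fourier analysis and the standard regularity lemmas, and one must verify that the lower bound $\alpha\gg N^{-1/2}$ required to make the main term dominate the trivial count persists under iteration (which is automatic since the target bound is polynomially far from the trivial threshold). Careful bookkeeping of the genus-dependent constant $C_{s,\mathbf{c}}$ and of the $s$-dependence of the implied constants in Sanders' machinery is needed, but should not cause essential difficulty.
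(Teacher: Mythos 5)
There is a genuine gap at the pivotal step, namely the claim that a single large Fourier coefficient of relative size $\gg\alpha^{1/(s-2)}$ is ``exactly the input into the Bohr-set density increment machinery of \cite{sanders1}.'' It is not. Sanders' method does not take a large Fourier coefficient as input and does not produce a density increment of the form $\alpha(1+\Omega(\alpha^{1/(s-2)}))$; a density increment of that relative size, fed into the standard Bohr-set iteration, requires $\Omega(\alpha^{-1/(s-2)}\log(1/\alpha))$ steps, and since each step increases the rank and shrinks the width of the Bohr set, the measure of the final Bohr set decays like $\exp(-C\alpha^{-2/(s-2)}\log^{O(1)}(1/\alpha))$ rather than $\exp(-C\alpha^{-1/(s-2)}\log^{O(1)}(1/\alpha))$. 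That accounting recovers only Bourgain-type exponents (compare the Liu--Zhao bound with exponent $2(s-2)^2/(4s-9)$ quoted in the introduction, which is exactly what adapting the single-coefficient extraction yields), not the exponent $s-2$ claimed in the theorem. The whole point of \cite{sanders1} is to obtain a \emph{constant-factor} increment $\alpha\mapsto\alpha(1+c)$ per step --- so that only $O(\log(1/\alpha))$ iterations are needed --- at the cost of a one-off rank increase of $\alpha^{-1}\log^{O(1)}(1/\alpha)$, and this is achieved not by Fourier extraction from the solution count but by the Katz--Koester transformation (replacing $A\ast A$ by $L\ast S$ with $L$ of density $\gg 1$) combined with Croot--Sisask almost-periodicity applied to the resulting convolution.

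Consequently your proposal also mislocates the source of the exponent $s-2$. You derive $1/(s-2)$ from bounding $s-2$ Fourier factors by $\sup_{\gamma\neq 0}|\hat f(c_i\gamma)|$ and applying Parseval to the remaining two; the paper instead obtains it inside the iterated Katz--Koester procedure (Theorem \ref{kkthm} and its integer analogue Theorem \ref{int1}), where the $(s-1)$-fold convolution $A_1\ast\cdots\ast A_{s-1}$ provides $k=s-2$ slots, and the density of $L$ is amplified from $\alpha$ to a constant in $k$ stages of $\lceil\alpha^{-1/k}\rceil$ steps each, so that the rank cost per constant-factor density increment is $O(\alpha^{-1/(s-2)}\log^{O(1)}(1/\alpha))$ instead of $O(\alpha^{-1}\log^{O(1)}(1/\alpha))$. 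Your opening Fourier computation (the extraction of $M/|A|\gg\alpha^{1/(s-2)}$, modulo issues with $\gcd(c_i,N)$) is correct as far as it goes, but it is the starting point of the Roth--Bourgain argument, not of Sanders', and it cannot be pushed to the stated bound without the Katz--Koester/Croot--Sisask mechanism that your sketch omits entirely.
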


We remark that the implicit constant in Theorem \ref{mainthm1} in fact depends only on $\ell=\max_{1\leq i\leq s}\lvert c_i\rvert$.
In \cite{ruzsa} Ruzsa showed that if the equation \eqref{main} has genus $m$ then $r(N)\ll N^{1/m}$, which is far superior to Theorem \ref{mainthm1} whenever $m\geq 2$. The power of the methods used to obtain the bounds listed above, which originated in Roth \cite{roth}, is that they give a non-trivial result in the most difficult case $m=1$. When $s\geq 6$, recent work of Schoen and Shkredov \cite{schoen}, building on other work of Sanders, shows that for some absolute positive constants $C$ and $c$ we have the near-optimal bound $r(N)\ll N\exp(-C\log^cN)$. Theorem \ref{mainthm1}, however, gives the sharpest known bounds for $s=4$ and $s=5$. 

By partial summation we obtain the following corollary.
\begin{corollary}\label{erdos}
 If $A\subset \bbn$ satisfies $\sum_{a\in A}a^{-1}=\infty$ then $A$ contains infinitely many non-trivial solutions to every translation invariant equation of the form \eqref{main} having $s\geq 4$ variables.
\end{corollary}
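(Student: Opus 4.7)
My approach is to deduce this directly from Theorem~\ref{mainthm1} by partial summation, after first transferring the quantitative bound from $\bbz/N\bbz$ to initial segments of $\bbn$.

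First I would argue by contradiction: suppose $A\subset\bbn$ with $\sum_{a\in A}a^{-1}=\infty$ contains only finitely many non-trivial solutions to some translation invariant equation in $s\geq 4$ variables. Since each such solution uses at most $s$ elements of $A$, a finite set $F\subset A$ suffices to cover all of them, and then $A':=A\setminus F$ contains no non-trivial solutions at all while still satisfying $\sum_{a\in A'}a^{-1}=\infty$.

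Next, I would transfer Theorem~\ref{mainthm1} to $\bbn$. Writing $\ell=\max_i\abs{c_i}$, any $\mathbf{x}\in[1,M]^s$ with $\sum_i c_ix_i\equiv 0\pmod N$ satisfies $\abs{\sum_i c_ix_i}<N$ in $\bbz$ as soon as $N>2\ell s M$, so lifts automatically to a genuine integer solution. Choosing $N$ proportional to $M$ therefore yields
\[\abs{A'\cap[1,M]}\leq r_{s,\mathbf{c}}(N)\ll_{s,\mathbf{c}} M\left(\frac{(\log\log M)^5}{\log M}\right)^{s-2}.\]
Partial summation, with $B(M):=\abs{A'\cap[1,M]}$, then gives
\[\sum_{\substack{a\in A'\\ a\leq M}}\frac{1}{a}=\frac{B(M)}{M}+\sum_{n<M}\frac{B(n)}{n(n+1)}\ll_{s,\mathbf{c}}\sum_{n\geq 2}\frac{(\log\log n)^{5(s-2)}}{n(\log n)^{s-2}},\]
and for $s\geq 4$ the exponent $s-2\geq 2$ makes the right hand side converge, contradicting the divergence of $\sum_{a\in A'}a^{-1}$.

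There is no real obstacle in this argument: the corollary is a routine reading of the quantitative strength of Theorem~\ref{mainthm1}. It is worth emphasising, though, why the hypothesis $s\geq 4$ is essential: Sanders's bound \eqref{sanders} in $s=3$ variables only delivers exponent $1$, which produces a summand of order $1/(n\log n)$ up to log-log factors, so the series just barely fails to converge and the Erd\H{o}s--Tur\'an conjecture for three-term progressions remains out of reach.
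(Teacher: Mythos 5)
Your argument is correct and is precisely the deduction the paper intends when it says the corollary follows "by partial summation": remove the finitely many elements involved in solutions, bound $\abs{A'\cap[1,M]}$ by $r_{s,\mathbf{c}}(N)$ for $N\asymp M$ (no wraparound), and observe that $\sum_n (\log\log n)^{5(s-2)}/(n(\log n)^{s-2})$ converges once $s-2\geq 2$. The paper gives no further detail, so there is nothing to compare beyond noting that your write-up supplies the routine steps it omits.
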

For comparison, a conjecture of Erd\H{o}s asserts that if $\sum_{a\in A}a^{-1}=\infty$ then $A$ must contain infinitely many arithmetic progressions of length $k$ for any $k\in\bbn$. In the case $k=3$, this would follow were Corollary \ref{erdos} to hold with $s=3$, which would in turn follow if one could improve the $(\log\log N)^5$ factor in Theorem \ref{mainthm1} to $(\log\log N)^{-2}$, for example.

Another popular setting for the problem of finding solutions to the equation \eqref{main} is $\bbf_p^N$, an $N$-dimensional vector space over the finite field $\bbf_p$. Here we have coefficients $c_i\in \bbf_p$ and we define $r_{s,\mathbf{c}}(N)$ to be the cardinality of the largest subset of $\bbf_p^N$ that contains only trivial solutions to the equation \eqref{main}. The best known bounds here are superior to those in $\bbz/N\bbz$. Thus, for example, the simple method of Meshulam \cite{meshulam} yields $r(N)\ll p^N/N$, a conclusion comparable to $r(N)\ll N/\log N$ in the integer case. Furthermore, Bateman and Katz \cite{batemankatz} have recently improved this upper bound to show that for some absolute constant $\epsilon>0$, one has $r(N)\ll p^N/N^{1+\epsilon}$. The investigation of such problems in $\bbf_p^N$ has recently become popular since it is often conceptually easier to prove results in $\bbf_p^N$, taking advantage of the vector space structure, and then `translate' the methods to the more difficult case $\bbz/N\bbz$ (see \cite{green} for a comprehensive discussion of this technique). 

One setting that has received comparatively little attention is $\bbf_q[t]$, the ring of polynomials over a finite field; as in the case of $\bbz$ it is more convenient to deal with finite truncations of this infinite ring, so we shall work in the additive subgroup of polynomials with degree strictly less than $N$, which we shall denote by $G_N$. There are many well-known analogies between number theory in $\bbz$ and in $\bbf_q[t]$, and theorems in one often have a natural counterpart in the other. We should expect, in particular, some result analogous to \eqref{sanders} to hold for solutions to the equation \eqref{main} where $\mathbf{c}\in\bbf_q[t]^s$. Perhaps part of the reason why this problem has been largely overlooked is that $G_N$ is additively isomorphic to $\bbf_q^N$, since both are $N$-dimensional vector spaces over $\bbf_q$. Hence in the classical case, with $s=3$ and $\mathbf{c}=(1,-2,1)$, the strong bounds of the $\bbf_q^N$ setting are available. In general, such bounds are available whenever $\mathbf{c}\in\bbf_p^s$, where $p$ is the characteristic of $\bbf_q$. In the $\bbf_q[t]$ setting, however, this restricts all coefficients to have degree zero, and hence trivial size under the usual valuation $\lvert a\rvert=q^{\deg a}$. Viewed in this way it is less surprising that we can obtain such impressive bounds with such a strong restriction on the coefficients of our linear equation. Note in particular that such issues of triviality do not arise in the $\bbz/N\bbz$ case, since $s\geq 3$ and translation invariance forces at least one of the coefficients of the equation \eqref{main} to have non-trivial size.

When $s\geq 3$ and $\mathbf{c}\in\bbf_q[t]^s$, denote by $r(N)$ the size of the largest subset of $G_N$ which contains no non-trivial solutions to the equation \eqref{main}. The sharpest bound for $r(N)$ currently available in the case where $\mathbf{c}\in\bbf_q^s$ is due to Liu and Spencer \cite{liuspencer} who showed that $r(N)\ll q^N/N^{s-2}$. The second result of this paper adapts the method of Sanders to give a comparable bound even when the coefficients $c_i$ have large degree. As above, this result gives a bound which improves as $s$ increases.

\begin{theorem}\label{mainthm2}
Let $s\geq 3$, and suppose $\mathbf{c}\in\bbf_q[t]^s$ is such that the equation \eqref{main} has genus $m\geq 1$. Then
\[r_{s,\mathbf{c}}(N)\ll_{s,\ell}q^N\left(\frac{(\log N)^4}{N}\right)^{s-2},\]
where $\ell=\max(\deg c_i)$.
\end{theorem}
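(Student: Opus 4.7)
The plan is to mirror the density-increment approach underlying Sanders' estimate \eqref{sanders} and its multi-variable extension in Theorem \ref{mainthm1}, and to transfer it to the function-field setting. Because $G_N$ is an $\bbf_q$-vector space, the Bohr sets that appear in the integer argument collapse to honest $\bbf_q$-subspaces, which morally is why one should save a logarithm (hence $(\log N)^4$ here in place of $(\log\log N)^5$ in Theorem \ref{mainthm1}). Starting from $A\subseteq G_N$ of density $\alpha$ free of non-trivial solutions to \eqref{main}, the aim is to establish the dichotomy that either $\alpha$ is already as small as the theorem demands, or $A$ attains density at least $\alpha(1+c\alpha^{1/(s-2)})$ on some affine subspace $V\leq G_N$ of codimension $O_\ell(\log^{O(1)}(\alpha^{-1}))$; one then iterates until the dimension is exhausted.

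The first step is the standard Fourier expansion. Fix a non-degenerate bilinear pairing on the appropriate ambient group, identify characters with polynomials, and write
\[T(A)=q^{-(N+\ell)}\sum_{\theta}\prod_{i=1}^{s}\widehat{\mathbf{1}_A}(c_i\theta).\]
The $\theta=0$ term contributes $\alpha^{s}q^{(s-1)N}$ and the trivial solutions contribute $O_{s,\mathbf{c}}(\lvert A\rvert^m)$. Pulling out $s-2$ copies of $\|\widehat{\mathbf{1}_A-\alpha}\|_\infty$ and applying Parseval to the remaining two factors bounds the error by $\ll_{\ell}\alpha q^{N}\|\widehat{\mathbf{1}_A-\alpha}\|_\infty^{s-2}$. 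Hence either $T(A)$ dominates the trivial count and we are done, or there is a non-zero $\theta_0$ with $\bigl\lvert\widehat{\mathbf{1}_A}(\theta_0)\bigr\rvert\gg\alpha^{(s-1)/(s-2)}q^{N}$.

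Converting this single large coefficient into the promised density increment is the heart of the matter, and here I would import the machinery of \cite{sanders1}: an $L^p$ almost-periodicity lemma of Croot--Sisask type followed by a Chang-style spectrum argument, exactly as in Theorem \ref{mainthm1}, with the simplification that the intermediate Bohr objects are now genuine $\bbf_q$-subspaces. Iterating the resulting increment, $\asymp\alpha^{-1/(s-2)}$ rounds double the density, and summing the geometric series of codimension losses produces a total cost $O_\ell\bigl(\alpha^{-1/(s-2)}\log^{O(1)}(\alpha^{-1})\bigr)$; requiring this not to exceed $N$ yields $\alpha\ll_\ell\bigl((\log N)^4/N\bigr)^{s-2}$ after optimising the logarithmic exponent.

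The main obstacle is that multiplication by $c_i$ on $G_N$ is not a Fourier isometry when $\deg c_i\geq 1$: it shifts degrees by $\ell$ and then requires a truncation, so the dilated spectra $c_i^{-1}\cdot\mathrm{Spec}_\eta(\mathbf{1}_A)$ do not coincide with the original spectrum on the nose. Checking that each such dilation costs only a constant depending on $\ell$, so that the iteration can be carried out uniformly until the ambient codimension drops to $O(\ell)$ (below which the Fourier setup degenerates and one can no longer separate $c_i\theta=0$ from $\theta=0$), is the technical point that separates this argument from the $\bbf_q$-coefficient case of Liu--Spencer \cite{liuspencer} and accounts for the $\ell$-dependence in the implicit constant.
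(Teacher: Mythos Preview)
Your proposal has a genuine gap: it omits the Katz--Koester step, which is the essential new ingredient in Sanders' method, and as a result the argument you sketch cannot produce the claimed bound.

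Concretely, the step ``pull out $s-2$ copies of $\|\widehat{\mathbf{1}_A-\alpha}\|_\infty$ and apply Parseval to the remaining two'' is the Roth--Meshulam manoeuvre; it yields a single large Fourier coefficient of size $\gg \alpha^{(s-1)/(s-2)}q^N$, from which one obtains a density increment of $(1+c\alpha^{1/(s-2)})$ on a set of codimension~$1$ --- not codimension $\log^{O(1)}(1/\alpha)$ as you assert. Iterating that gives only $\alpha\ll N^{-(s-2)}$ (the Liu--Spencer bound), and in fact even this fails once $\deg c_i\geq 1$, since a codimension-$1$ subspace is not preserved by dilation by $c_i$. Invoking ``Croot--Sisask followed by Chang'' at this point does nothing: almost-periodicity is a statement about convolutions $f\ast\mu_S$, and its strength comes from the \emph{density} of the set one convolves against. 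With $A$ itself in that role (density $\alpha$), the almost-period set $T$ has density only $\alpha^{O(\epsilon^{-2}p)}$, and after Chang one recovers nothing beyond the Meshulam increment.

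What the paper actually does is entirely different in structure. Before any Fourier analysis, it applies an iterated Katz--Koester transformation (Theorem~\ref{kkthm}) to replace $A_1\ast\cdots\ast A_{s-1}$ by $\alpha^{-2}L\ast S_1\ast\cdots\ast S_{s-2}$ with $\lambda=\beta(L)\gg 1$ and $\sigma_i\geq\alpha^{C\alpha^{-1/(s-2)}}$. The constant density of $L$ is exactly what makes Croot--Sisask effective (Theorem~\ref{csthm}): the almost-period set is large, and after taking $l\asymp\log(1/\alpha)$-fold sums and applying Chang one lands in a Bohr set of rank increase $O(\alpha^{-1/(s-2)}\log^4(1/\alpha))$ on which the density jumps by a \emph{constant} factor $(1+c)$, not $(1+c\alpha^{1/(s-2)})$. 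This is Lemma~\ref{di}, and it iterates only $O(\log(1/\alpha))$ times. The dilation issue you flag is handled separately by Lemma~\ref{dilatebohr}, which shows that $c\cdot B$ is again a Bohr set with rank increased by at most $q^{\deg c}$; this is where the $\ell$-dependence enters, and it is used inside the iteration (Theorem~\ref{big}) rather than absorbed into a terminal constant.
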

Note here that the implicit constant depends only on the highest degree of the coefficients $c_i$. It also depends on $q$, the size of the constant field. Throughout this paper, this will be considered fixed as will $s$ and $\mathbf{c}$, and all constants may depend on these.

It is not difficult to get some kind of quantitative bound for $r(N)$ even when the coefficients have large degree, and indeed the original method of Roth can be adapted with a little effort to obtain $r(N)\ll q^N/\log N$. Theorem \ref{mainthm2} goes beyond this to show that we can obtain a quantitative bound comparable to the best known in $\bbz/N\bbz$. Indeed, in $\bbf_q[t]$ we are able to do slightly better (by a factor of $\log N$) than the analogue of \eqref{sanders}. This improvement is a consequence of the vector space structure of $\bbf_q[t]$ which we are able to exploit even when the coefficients have large degree. 

We remark that the methods of \cite{ruzsa} are easily adapted to the $\bbf_q[t]$ setting to give $r(N)\ll q^{N/m}$ whenever \eqref{main} has genus $m$, a conclusion sharper than that of Theorem \ref{mainthm2} whenever $m\geq 2$. The strength of Theorem \ref{mainthm2} is that it applies when $m=1$, which appears to be the most difficult case. 

Since this paper was originally written, we have been made aware of recent similar results by Liu and Zhao \cite{liuzhao}. By adapting the methods of \cite{bourgain1} and \cite{bourgain2} to the $\bbf_q[t]$ setting they show, under the same hypotheses as Theorem \ref{mainthm2}, that
\[r_{s,\mathbf{c}}(N)\ll_{s,\ell}q^N\brac{\frac{(\log N)^2}{N}}^{\frac{2(s-2)^2}{4s-9}}.\]
This is weaker than Theorem \ref{mainthm2} for all $s\geq 3$, as it does not use the new techniques introduced in \cite{sanders1}. We note, however, that the mechanisms they use to translate ideas from the $\bbz/N\bbz$ setting of \cite{bourgain1} and \cite{bourgain2} to $\bbf_q[t]$ are very similar to those introduced in this paper. In particular, they also give the proper analogue of Bohr sets in the $\bbf_q[t]$ setting.

The main ideas and techniques used to establish Theorems \ref{mainthm1} and \ref{mainthm2} are those of the original argument in \cite{sanders1}, but we are able to make many technical simplifications in $\bbf_q[t]$, largely thanks to the fact the analogue of `Bohr sets' are closed under addition. Indeed, we hope that this paper may also serve as an exposition of the methods in \cite{sanders1}, for in $\bbf_q[t]$ the fundamental ideas are less obscured by technical difficulties. We further hope that this paper will indicate how to translate many results of additive combinatorics to an analogous result in $\bbf_q[t]$, which we believe to be a technically simpler model for such results.

In recent years there has been a focus on obtaining such results in $\bbf_p^N$ and then translating the methods used so as to obtain a result in $\bbz/N\bbz$. We believe that the translation from $\bbf_p^N$ to $\bbf_q[t]$ and thence to $\bbz/N\bbz$ is simpler to perform and more intuitive. We are confident that this new method will stimulate further progress in other problems of additive combinatorics.

Thus, for example, the approach taken in this paper is to first establish Theorem \ref{mainthm2}, the $\bbf_q[t]$ case, and then sketch how to adapt the proof to obtain Theorem \ref{mainthm1}, the $\bbz/N\bbz$ case. In Section \ref{notation} we explain the notation used in the rest of the paper and prove two useful lemmas. Sections \ref{kk} and \ref{cs} provide the two tools used in the approach of Sanders \cite{sanders1}, namely a combinatorial transformation of sets inspired by the work of Katz and Koester \cite{katzkoester} and an application of the useful probabilistic theorem due to Croot and Sisask \cite{crootsisask}. Section \ref{disection} combines these tools in the `density increment' strategy so as to conclude the proof of Theorem \ref{mainthm2}. Section \ref{integercase} then sketches a proof of Theorem \ref{mainthm1}. Here we are brief, since the proofs are the same as those for Theorem \ref{mainthm2} except for routine technical changes.

\section{Notation and preliminaries}\label{notation}
Throughout the rest of this paper, let $N$ be some fixed large integer and $\bbf_q$ be the finite field with $q$ elements. We shall write $G=G_N=\{ x\in\bbf_q[t] : \deg x<N\}$, and note in particular that $G$ is an $N$-dimensional vector space over $\bbf_q$. We will frequently use the expectation notation
\[\bbe_{x\in X}f(x)=\frac{1}{\lvert X\rvert}\sum_{x\in X}f(x).\]
Unless denoted otherwise, all expectations are taken over $G$. By a convenient abuse of notation, we use the same letter for the set and its characteristic function. When considering $C\subset G$, we use $\mu_C$ to denote the measure induced by $C$, so that 
\[\mu_C(x)=\frac{\abs{G}}{\abs{C}}C(x)\]
and for $A\subset G$,
\[\mu_C(A)=\langle \mu_C,A\rangle=\frac{\abs{A\cap C}}{\abs{C}}.\]
For notational convenience we will also use $\beta$ to denote $\mu_B$. Given a subspace $B\subset G$ we will denote subsets of $B$ by capital Roman letters, and their density within $B$ by the corresponding lower case Greek letter, so that $\beta(A)=\alpha$, for example. We draw special attention to our novel notation for the balanced function of a subset $A\subset B$, defined by
\[\mathbf{A}(x):=(A-\alpha B)(x)=\begin{cases}
                                  1-\alpha & \text{if }x\in A,\\
-\alpha & \text{if }x\in B\backslash A\text{, and }\\
0 &\text{otherwise,}
                                 \end{cases}
\]
so that $\bbe_{x\in B}\mathbf{A}(x)=0$. For any functions $f,g:G\to\bbc$ we define
\[\langle f,g\rangle_{\mu_X}=\bbe_{x\in X}f(x)\overline{g(x)},\]
and similarly, for any $1\leq p<\infty$,
\[\norm{f}_{p(\mu_X)}=\bigg(\bbe_{x\in X} \abs{f(x)}^{p}\bigg)^{1/p}\text{ \,\,and }\|f\|_{\infty (\mu_X)}=\sup_{x\in X}\lvert f(x)\rvert.\]

Where the measure is omitted these expectations should be taken over $G$. There is an analogue of Fourier analysis in the $\bbf_q[t]$ setting. If $x=\sum_{i<N}a_i t^i\in\bbf_q(\!(1/t)\!)$ then we define
\[e(x):=\exp(2\pi i\text{Tr}(a_{-1})/p)\]
where $\text{Tr}:\bbf_q\to\bbf_p$ is the familiar trace map. We may now define a character on $G$ as the map $x\mapsto e(\xi x)$ where $\xi$ is a member of $\mathbb{T}$, the dual group of $\bbf_q[t]$ which is defined by
\[\mathbb{T}=\left\{\sum_{i<0}a_it^i : a_i\in\bbf_q\right\}.\]
The usual definitions and results of Fourier analysis apply in this case, so we define the Fourier transform of $f$ by 
\[\widehat{f}(\xi)=\bbe_{x\in G}f(x)e(\xi x),\]
and we have Parseval's identity
\[\sum_{\xi\in\widehat{G}}\widehat{f}(\xi)\overline{\widehat{g}(\xi)}=\langle f,g\rangle\]
where $\widehat{G}$ is the dual group of $G$ defined by
\[\widehat{G}=\{a_{-1}t^{-1}+\cdots+a_{-N}t^{-N} : a_i\in\bbf_q,\,\,-N\leq i\leq -1\}.\]
For a more comprehensive discussion and detailed proofs for Fourier analysis over $\bbf_q[t]$ we refer the reader to \cite{kubota}.
In most cases, when we use the convolution operator it will be with respect to the measure over some subspace $B$ (which will be clear from the context), and hence we define
\[f\ast g(x)=\bbe_{y\in B}f(y)g(x-y).\]
In some cases, $g$ will be a measure $\mu_C$ for some other set $C$, and there we define
\[f\ast \mu_C(x)=\bbe_{y\in C}f(x-y).\]
Note in particular that when convolving with a measure, the convolution itself is always global (i.e. over the entire group $G$). As usual, the Fourier transform converts convolution into multiplication, with an appropriate scaling factor:
\[\mu_G(B)\widehat{f\ast g}(\xi)=\widehat{f}(\xi)\widehat{g}(\xi).\]
This has the useful corollary, frequently used without mention in what follows, that
\[\mu_G(B)\bbe f\ast g=\bbe f\bbe g.\]
For finite $\Gamma\subset\mathbb{T}$ and $\kappa: \Gamma\to\bbn$ we define the Bohr set $B_\kappa(\Gamma)$ to be the set
\[\{x\in G : \deg\{ x\xi \}<-\kappa(\xi)\text{ for all }\xi\in\Gamma\}\]
where if $x=\sum_{i<N}a_i t^i\in\bbf_q(\!(1/t)\!)$ then $\{ x\}=\sum_{i<0}a_i t^i$. We call the size of $\Gamma$ the rank of $B$, denoted by $\text{rk}(B)$, and call $k=\max_{\xi\in\Gamma}\kappa(\xi)$ the width of $B$. We note the easy estimate
\[\lvert B_\kappa(\Gamma)\rvert\geq q^{N-\sum_{\xi\in\Gamma}\kappa(\xi)}\geq q^{N-k\lvert \Gamma\rvert}.\]
Furthermore, note that $B_\kappa(\Gamma)$ is itself a vector space over $\bbf_q$. If $B=B_\kappa(\Gamma)$ is a given Bohr set then by $B_k$ we denote the Bohr set $B_{\kappa+k}(\Gamma)$.

 This is analogous to the traditional Bohr set in the context of $\bbz/N\bbz$,
\[\text{Bohr}_\rho(\Gamma)=\left\{ x: \norm{\frac{x\xi}{N}}<\rho\text{ for all }\xi\in\Gamma\right\}\]
where $\|\cdot\|$ measures the distance to the nearest integer and $\Gamma$ is a subset of $\bbz/N\bbz$. For both concepts, the idea is to find a set on which a given set of characters is `approximately' trivial. This is made easier in our setting of $\bbf_q[t]$, since a character can only take finitely many values even as $N\to\infty$, unlike the $\bbz/N\bbz$ case. The significant observation here is that the other important feature of Bohr sets is that they are approximately closed under dilation by the coefficients of our linear forms; in the $\bbf_q[t]$ case this follows from the inclusion $\lambda\cdot B_{\kappa+m}(\Gamma)\subset B_\kappa(\Gamma)$ which holds whenever $\deg \lambda\leq m$. We also desire that they be, at least approximately, closed under addition. This has obscured the dilation preservation feature in the traditional settings of $\bbz/N\bbz$ and $\bbf_p^N$, since this follows from closure under addition using crude estimates such as $2\cdot A\subset A+A$.

In the density increment procedure used to prove Theorem \ref{mainthm2} we will need to dilate a Bohr set by some $c\in\bbf_q[t]$ and ensure that this dilate is itself a Bohr set, so that the next iteration can be performed. The following lemma shows that this is true, and that the rank of this new Bohr set can only increase by some bounded amount. Recall that $G_m$ denotes the set of polynomials in $\bbf_q[t]$ with degree strictly less than $m$. 
\begin{lemma}\label{dilatebohr}
 If $c\in \bbf_q[t]\backslash\{0\}$ and $B=B_\kappa(\Gamma)$ is a Bohr set with width $k$ such that $B\subset G_{N-\deg c}$ then $c\cdot B$ is a Bohr set with rank at most $\text{rk}(B)+q^{\deg c}$ and width $k$.
\end{lemma}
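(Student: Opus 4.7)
The plan is to exhibit $c\cdot B$ explicitly as a Bohr set by choosing the new frequency set $\Gamma'$ and widths $\kappa'$ so that $B_{\kappa'}(\Gamma')=c\cdot B$. Let $d=\deg c$. The frequencies naturally split into two groups: a rescaled copy of the original $\Gamma$, plus some auxiliary frequencies whose job is to encode the divisibility condition $c\mid y$.

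For the first group, for each $\xi\in\Gamma$ I would put $\xi/c$ into $\Gamma'$ with width $\kappa'(\xi/c)=\kappa(\xi)$. Since $\deg\xi\leq -1$ we have $\deg(\xi/c)\leq -1-d<0$, so $\xi/c\in\mathbb{T}$. Writing $y=cx$, the identity $y\cdot(\xi/c)=x\xi$ shows that the condition $\deg\{y(\xi/c)\}<-\kappa(\xi)$ on $y$ is literally the condition $\deg\{x\xi\}<-\kappa(\xi)$ on $x$, so these frequencies recover membership $x\in B$.

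For the second group, I would add $\alpha/c$ for each nonzero $\alpha\in\bbf_q[t]$ with $\deg\alpha<d$, each with width $1$; this introduces $q^d-1$ new frequencies, all lying in $\mathbb{T}$. The content of each new condition is $[t^{-1}](y\alpha/c)=0$. This depends on $y$ only through $y\bmod c$ (adding a multiple of $c$ to $y$ changes $y\alpha/c$ by a polynomial) and is $\bbf_q$-linear in $\alpha\bmod c$, so it defines a bilinear pairing on $\bbf_q[t]/(c)\times\bbf_q[t]/(c)$. The key point, and the step I expect to be the most technically involved, is that this pairing is nondegenerate; this is the standard Fourier duality for the finite $\bbf_q$-algebra $\bbf_q[t]/(c)$. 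Granting this, the vanishing of the pairing for all $\alpha$ forces $y\equiv 0\pmod c$, i.e.\ $y=cx$ for some $x\in\bbf_q[t]$, and the constraint $y\in G_N$ then automatically gives $\deg x<N-d$.

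Putting the two groups together and using the hypothesis $B\subset G_{N-\deg c}$, one sees that $y\in B_{\kappa'}(\Gamma')$ if and only if $y=cx$ with $x\in B$, that is, $y\in c\cdot B$. The size of $\Gamma'$ is at most $\abs{\Gamma}+q^d-1\leq\text{rk}(B)+q^{\deg c}$, and every width is either one of the original $\kappa(\xi)\leq k$ or equals $1\leq k$, so the overall width remains $k$, as required.
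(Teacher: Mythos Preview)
Your proposal is correct and takes essentially the same approach as the paper's proof: both set the new frequency set to be $c^{-1}\Gamma$ together with $c^{-1}\alpha$ for $\alpha\in G_{\deg c}$ (you omit the redundant $\alpha=0$), and both reduce the identification $c\cdot B=B_{\kappa'}(\Gamma')$ to showing that the auxiliary conditions $\deg\{y\alpha/c\}<-1$ characterise $c\mid y$. The paper establishes this last point via the orthogonality relation $\sum_{l\in G_{\deg c}}e(c^{-1}ly)=q^{\deg c}$ iff $c\mid y$, combined with the closure $\bbf_q\cdot G_{\deg c}=G_{\deg c}$ to pass from vanishing trace to vanishing $t^{-1}$-coefficient, which is precisely your nondegenerate-pairing claim phrased in character-sum language.
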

\begin{proof}
Choose $c^{-1}\in\bbf_q(t)$ such that $c^{-1}c=1$ and let $\Gamma'=c^{-1}(\Gamma\cup G_{\deg c})$. Let $\tilde{\Gamma}=\{\{ x\} : x\in\Gamma'\}$. We have $\tilde{\Gamma}\subset\widehat{G}$ and $\lvert \tilde{\Gamma}\rvert\leq\lvert \Gamma'\rvert\leq \lvert \Gamma\rvert+q^{\deg c}$, so to complete the proof it remains to show that $c\cdot B=B_{\kappa'}(\tilde{\Gamma})$, where $\kappa'$ defined by $\kappa'(\xi)=\kappa(\gamma)$ if $\xi= c^{-1}\gamma$ for some $\gamma\in\Gamma$ and $\kappa'(\xi)=1$ otherwise.

By the orthogonality relationship of the exponential function (see Lemma 1 in \cite{kubota}) we have the identity
\[\sum_{l\in G_{\deg c}}e(c^{-1}lx)=\begin{cases}0 & \text{ when }c\nmid x\text{, and }\\q^{\deg c}&\text{ when }c\mid x.\end{cases}\]
By the definition of $e(\cdot )$, however, we also have $\sum_{l\in G_{\deg c}}e(c^{-1}lx)=q^{\deg c}$ if and only if $\text{Tr}(c^{-1}l x)=0$ for all $l\in G_{\deg c}$, which is true if and only if $\deg \{ c^{-1}lx\}<-1$ for all $l\in G_{\deg c}$ since $\bbf_q\cdot G_{\deg c}=G_{\deg c}$. It follows that $c\mid x$ if and only if $\deg \{ c^{-1}lx\}<-1$ for all $l\in G_{\deg c}$. If we have some $y\in B_{k}(\tilde{\Gamma})$, therefore, then we must have $y=cx$ for some $x\in G_{N-\deg c}$. 

Furthermore, for all $\gamma\in \Gamma$ we have $\deg\{ x\gamma\}=\deg \{yc^{-1}\gamma\}<-k$ and hence $B_{k}(\tilde{\Gamma})\subset c\cdot B$. Finally, if $x\in B$ then for any $\gamma\in \Gamma$ we have $\deg\{ cxc^{-1}\gamma\}=\deg\{ x\gamma\}<-k$ and for any $l\in G_{\deg c}$ we have $\{cx\{c^{-1}l\}\}=\{cxc^{-1}l\}=0$ so certainly $\deg\{ cxc^{-1}l\}<-1$, so that $cx\in B_{k}(\tilde{\Gamma})$ and hence $B_{k}(\tilde{\Gamma})=c\cdot B$ as required.
\end{proof}

We will frequently use both the big-$O$ notation and the Vinogradov $\ll$ symbol, where the implicit constant depends on at most $s$, $\ell$ and $q$. We will also sometimes use a $c$ or $C$ to denote some positive constant, again depending on at most $s$, $\ell$ and $q$, which may vary from line to line. All these constants may be made explicit, though we do not do so here.

We shall now define the concepts of spectra and symmetry sets. We define the $\eta$-spectrum at $G$ of a function $f$ to be
\[\Delta_\eta(f)=\{\xi\in\widehat{G} : \lvert\widehat{f}(\xi)\rvert\geq\eta\| f\|_1\},\]
As above, we suppose $B$ is some subspace of $G$ which shall be clear from context, and for any $L,K\subset B$ we define the $\eta$-symmetry set to be 
\[\text{Sym}_{\eta}(L,K)=\{x\in B : L\ast K(x)\geq\eta\}.\]
Note that $\text{Sym}_{\eta}(L,-K)=-\text{Sym}_\eta(-L,K)$.
We will need some structural information on spectra due to Chang \cite{chang}. The following version is proved as Lemma 4.36 in \cite{taovu}.
\begin{lemma}\label{chang}
 Let $G$ be any finite additive group and $D\subset G$ with density $\delta$. If $\eta>0$ then there is a subset $\tilde{\Delta}\subset\Delta_\eta(D)$ such that $\lvert \tilde{\Delta}\rvert\ll\eta^{-2}\log(1/\delta)$ and
\[\Delta_\eta(D)\subset\{-1,0,1\}^{\lvert \tilde{\Delta}\rvert}\cdot\tilde{\Delta}.\]
\end{lemma}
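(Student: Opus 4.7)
My plan is to follow the standard Chang spectral argument: extract a maximal dissociated subset and bound its size via Rudin's inequality. Recall that $S\subset\widehat{G}$ is \emph{dissociated} if the only way to write $\sum_{\xi\in S}\epsilon_\xi\xi=0$ with each $\epsilon_\xi\in\{-1,0,1\}$ is with every $\epsilon_\xi=0$. I let $\tilde{\Delta}$ be any maximal dissociated subset of $\Delta_\eta(D)$. Given $\xi_0\in\Delta_\eta(D)\backslash\tilde{\Delta}$, maximality forces a non-trivial $\{-1,0,1\}$-relation $\epsilon\xi_0+\sum_{\xi\in\tilde{\Delta}}\epsilon_\xi\xi=0$, and dissociativity of $\tilde{\Delta}$ rules out $\epsilon=0$; hence $\epsilon=\pm1$ and $\xi_0=\mp\sum_{\xi}\epsilon_\xi\xi$ is a $\{-1,0,1\}$-combination of elements of $\tilde{\Delta}$. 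Elements of $\tilde{\Delta}$ itself lie trivially in the target set, yielding the required inclusion $\Delta_\eta(D)\subset\{-1,0,1\}^{\lvert\tilde{\Delta}\rvert}\cdot\tilde{\Delta}$.

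The heart of the proof is bounding $T:=\lvert\tilde{\Delta}\rvert$. I choose unit complex numbers $\omega_\xi$ so that $\omega_\xi\widehat{D}(\xi)=\lvert\widehat{D}(\xi)\rvert\geq\eta\delta$ for each $\xi\in\tilde{\Delta}$, and set $\phi(x):=\sum_{\xi\in\tilde{\Delta}}\omega_\xi e(\xi x)$. Then
\[T\eta\delta\leq\sum_{\xi\in\tilde{\Delta}}\omega_\xi\widehat{D}(\xi)=\bbe_{x\in G}D(x)\phi(x)\leq\norm{D}_{p}\norm{\phi}_{2k}\]
by H\"older's inequality with $p=2k/(2k-1)$, for an integer parameter $k\geq1$ to be chosen. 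Since $D$ is $\{0,1\}$-valued, $\norm{D}_p=\delta^{(2k-1)/(2k)}$, while Rudin's inequality applied to the dissociated set $\tilde{\Delta}$ gives $\norm{\phi}_{2k}\ll\sqrt{k}\,T^{1/2}$. Rearranging yields $T\ll k\delta^{-1/k}\eta^{-2}$, and choosing $k\asymp\log(1/\delta)$ makes $\delta^{-1/k}=O(1)$ and produces the desired bound $T\ll\eta^{-2}\log(1/\delta)$.

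The main obstacle is Rudin's inequality itself, which is a non-trivial moment estimate typically proved via a Riesz-product argument; here I would invoke it as a black box, following the reference cited for the lemma. The combinatorial extraction in the first paragraph and the H\"older-based calculation in the second are otherwise routine.
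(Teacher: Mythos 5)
Your argument is the standard proof of Chang's lemma (maximal dissociated subset, then H\"older against Rudin's inequality with $k\asymp\log(1/\delta)$), and it is correct as written; the only non-routine ingredient, Rudin's inequality for dissociated sets, is legitimately invoked as a known result. The paper itself does not prove this lemma but simply quotes it as Lemma 4.36 of Tao--Vu, whose proof is exactly the one you give, so there is nothing to compare beyond noting the match.
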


We now use this to convert the fact that we have large Fourier coefficients over a large spectrum into a more useful density increment property. The idea goes back to the work of Heath-Brown \cite{heath-brown} and Szemer\'{e}di \cite{szemeredi}, and was developed into the following form by Sanders \cite{sanders2}.
\begin{lemma}\label{hbs}
 For any Bohr set $B\subset G$ and $\eta>0$ if $A,D\subset B$ satisfy
\[\sum_{\gamma\in\Delta_\eta(D)}\lvert\widehat{\mathbf{A}}(\gamma)\rvert^2\geq\nu\alpha^2\mu_G(B)\]
then there is a Bohr set $B'\subset B$ with
\[\text{rk}(B')\leq \text{rk}(B)+O(\eta^{-2}\log(1/\delta))\text{ and }\beta(B')\geq\exp(-C\eta^{-2}\log(1/\delta))\]
such that $\| A\ast\beta'\|_\infty\geq\alpha(1+\nu)$.
\end{lemma}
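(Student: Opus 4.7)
The plan is to build a Bohr set $B' \subset B$ on which every character in $\Delta_\eta(D)$ acts trivially, use Parseval to convert the hypothesis into an $L^2$-lower bound on $\mathbf{A} \ast \beta'$, and then deduce the $L^\infty$-increment by the elementary inequality $\|f\|_2^2 \leq \|f\|_\infty \|f\|_1$ applied to $A \ast \beta'$.

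For the first step, I apply Chang's lemma (Lemma~\ref{chang}) to $D$ to obtain a generating set $\tilde{\Delta}$ with $|\tilde{\Delta}| \ll \eta^{-2}\log(1/\delta)$, and define $B' = B_{\kappa'}(\Gamma \cup \tilde{\Delta})$ by extending the existing $\kappa$ on $\Gamma$ by the value $1$ on $\tilde{\Delta} \setminus \Gamma$, where $B = B_\kappa(\Gamma)$. A distinctive feature of the $\bbf_q[t]$ setting is that width $1$ already trivialises a character: for $\xi \in \tilde{\Delta}$ and $x \in B'$, the condition $\deg\{\xi x\} < -1$ forces $e(\xi x) = 1$. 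Hence $\widehat{\beta'}(\xi) = 1$ on $\tilde{\Delta}$, and by multiplicativity in the frequency, $\widehat{\beta'}(\gamma) = 1$ for every $\gamma \in \Delta_\eta(D)$. The rank bound is immediate, and each new frequency of width $1$ imposes a single $\bbf_q$-linear constraint on $B'$, giving the stated density bound $\beta(B') \geq q^{-|\tilde{\Delta}|}$.

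For the second step, since $B$ is an $\bbf_q$-subspace containing $B'$, the function $\mathbf{A} \ast \beta'$ is supported on $B$ and satisfies $\mathbf{A} \ast \beta' = A \ast \beta' - \alpha$ there. Parseval together with $\widehat{\beta'} \equiv 1$ on $\Delta_\eta(D)$ then yields
\[\bbe_{x \in B}|\mathbf{A} \ast \beta'(x)|^2 = \frac{1}{\mu_G(B)}\sum_{\xi} |\widehat{\mathbf{A}}(\xi)|^2|\widehat{\beta'}(\xi)|^2 \geq \frac{1}{\mu_G(B)}\sum_{\gamma \in \Delta_\eta(D)}|\widehat{\mathbf{A}}(\gamma)|^2 \geq \nu\alpha^2.\]
Expanding the square on the left and using $\bbe_{x \in B} A \ast \beta'(x) = \alpha$ (which follows from $B' \subset B$ and translation-invariance of the mean) rearranges to $\bbe_{x \in B}(A \ast \beta'(x))^2 \geq (1+\nu)\alpha^2$. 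The elementary inequality $\bbe_{x \in B}(A \ast \beta')^2 \leq \|A \ast \beta'\|_\infty \cdot \bbe_{x \in B}A \ast \beta'$ then delivers $\|A \ast \beta'\|_\infty \geq (1+\nu)\alpha$.

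The only step requiring genuine care is the construction of $B'$; in the $\bbz/N\bbz$ setting one can only arrange $\widehat{\beta'}$ to be approximately $1$ on $\Delta_\eta(D)$ and must carry a small error term through Parseval, whereas the rigidity of $\bbf_q[t]$-Bohr sets makes the identity $\widehat{\beta'} \equiv 1$ on $\Delta_\eta(D)$ exact and eliminates any technical bookkeeping.
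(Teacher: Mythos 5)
Your proposal is correct and follows essentially the same route as the paper: Chang's lemma to extract a small generating set $\tilde{\Delta}$, refinement of $B$ by width-$1$ conditions at those frequencies so that $\widehat{\beta'}\equiv 1$ on $\Delta_\eta(D)$, Parseval to get the $L^2$ bound on $\mathbf{A}\ast\beta'$, expansion of the balanced function using $\bbe_{x\in B}A\ast\beta'=\alpha$, and the inequality $\|f\|_2^2\leq\|f\|_\infty\|f\|_1$ to conclude. The only (cosmetic) difference is that you normalise expectations over $B$ rather than over $G$; just make sure, as the paper does, that Chang's lemma is applied to $D$ viewed inside the group $B$ (via restriction of characters to $\widehat{B}$) so that the bound genuinely involves $\log(1/\delta)$ with $\delta=\beta(D)$.
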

\begin{proof}
Let $\widehat{B}\subset\widehat{G}$ be the dual group of $B$ viewed as an additive group. It is easy to check that for any $\gamma\in\widehat{G}$ there is some $\gamma'\in\widehat{B}$ such that $\gamma(x)=\gamma'(x)$ for all $x\in B$. Let $\Delta'=\widehat{B}\cap\Delta_\eta(D)$. Renormalising the Fourier transform, we see that $\Delta'$ is the $\eta$-spectrum at $B$ for $D$. Let $\tilde{\Delta}\subset\Delta'$ be the set given by Lemma \ref{chang}, and refine the Bohr set $B$ by setting
\[ B'=\{ x\in B : \deg\{ x\gamma \}<-1\text{ for all }\gamma\in\tilde{\Delta}\}.\]
It follows that 
\[\text{rk}(B')\leq \text{rk}(B)+\lvert \tilde{\Delta}\rvert\leq\text{rk}(B)+O(\eta^{-2}\log(1/\delta)),\]
and hence $\beta(B')\geq\exp(-C\eta^{-2}\log(1/\delta))$. Furthermore, for all $\gamma\in\Delta_\eta(D)$ we have $\widehat{\beta'}(\gamma)=\bbe_{x\in B'}e(x\gamma)=1$.
By Parseval's identity and the initial hypothesis
\[ \|\mathbf{A}\ast\beta'\|_2^2=\sum_{\gamma\in\widehat{G}}\lvert \widehat{\mathbf{A}}(\gamma)\rvert^2\lvert\widehat{\beta'}(\gamma)\rvert^2\geq\sum_{\gamma\in\Delta_\eta(D)}\lvert\widehat{\mathbf{A}}(\gamma)\rvert^2\geq\nu\alpha^2\mu_G(B).\]
Furthermore, note that $B\ast\beta'(y)=\bbe_{x\in B'}B(y-x)=B(y)$, and hence $\langle B\ast \beta',B\ast\beta'\rangle=\mu_G(B)$.
Similarly,
\[\langle A\ast\beta',B\ast\beta'\rangle=\mu_G(B)\bbe_{x\in B}\bbe_{y\in B'}A(y-x)=\alpha\mu_G(B).\]
In particular,
\begin{eqnarray*}
\|\mathbf{A}\ast \beta'\|_2^2&=&\| A\ast\beta'\|_2^2+\alpha^2\langle B\ast\beta',B\ast\beta'\rangle-2\alpha\langle A\ast\beta',B\ast\beta'\rangle\\
&=&\| A\ast\beta'\|_2^2-\alpha^2\mu_G(B).
\end{eqnarray*}
It follows that $\| A\ast \beta'\|_2^2\geq\alpha^2(1+\nu)\mu_G(B)$. Combining H\"{o}lder's inequality with the equality $\| A\ast\beta'\|_1=\alpha\mu_G(B)$ gives $\| A\ast\beta'\|_\infty\geq\alpha(1+\nu)$ as required.
\end{proof}
We make one final remark on the values of the densities $\alpha$ of the various $A\subset G$ we consider, where these $A$ will be sets with no non-trivial solutions to the equation \eqref{main}. Recall that we may assume $\alpha\leq r(N)\ll1/\log N$ by adapting the method of Roth. In particular, for sufficiently large $N$, we will assume that $\alpha<1/e^2$, so that $\log(1/\alpha)>2$. Making this assumption avoids some notational awkwardness.
\section{A combinatorial transformation}\label{kk}
We begin with a generalisation of one of the two main tools in the method of Sanders, a combinatorial transformation. This has its origins in the work of Katz and Koester \cite{katzkoester}, though a similar transformation known as the Dyson $e$-transform has been a useful tool in additive combinatorics since \cite{dyson}.

The idea is to find, given subsets $A_1,\hdots,A_{k+1}$ of a Bohr set $B$, corresponding $L,S_1,\hdots, S_k\subset B$ such that $L$ is dense inside $B$, the $S_i$ are not too sparse and 
\[L\ast S_1\ast\cdots\ast S_k\leq\alpha_1^{-2}A_1\ast A_2\ast\cdots\ast A_{k+1}.\]
 Since solutions to \eqref{main} are counted by $\langle (c_1\cdot A)\ast\cdots\ast(c_{s-1}\cdot A),c_s\cdot A\rangle$ to find many solutions to \eqref{main} in $A$ it then suffices to find a lower bound for $\langle L\ast \cdots\ast S_{s-2},c_s\cdot A\rangle$. The large density of $L$ may then be efficiently exploited using the recent probabilistic method of Croot and Sisask, as we shall do in the next section. When we cannot find such sets we find a strong density increment and jump to the final stage of the proof.

We first give a simplified proof of Lemma 4.2 from \cite{sanders1}, taking advantage of the fact that our Bohr sets are subspaces in order to streamline the proof. We also observe that the proof gives a slightly stronger statement than is recorded in \cite{sanders1}. We then apply this technical lemma iteratively to construct sets $L,S_1,\hdots, S_k$ as above to prove the main theorem of this section. This is a generalisation of Proposition 4.1 from \cite{sanders1}, taking advantage of the large number of convolutions to run a more efficient iterative procedure.

\begin{lemma}\label{kklemma}
  If $B$ is a Bohr set and $K,T,L,S\subset B$ then either
\begin{enumerate}
 \item there exists a Bohr set $B'\subset B$ with rank at most $\text{rk}(B)+O(\lambda\kappa^{-1}\log(1/\tau))$, density $\beta(B')\geq\exp(-C\lambda\kappa^{-1}\log(1/\tau))$ such that, for some $x\in B$, $\beta'(K+x)\geq\kappa/32\lambda$, or
\item there are $L',S'\subset B$ with $\beta(L')\geq\lambda+\kappa/2$ and $\beta(S')\geq\tau\sigma/2$ such that, for all $x\in G$,
\[L'\ast S'(x)\leq L\ast S(x)+K\ast T(x).\]
\end{enumerate}
 \end{lemma}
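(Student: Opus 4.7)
For each $x \in B$ form the pair $L'_x := L \cup (K+x)$ and $S'_x := S \cap (T-x)$, both of which lie in $B$ since $B$ is an $\bbf_q$-subspace. Writing
\[
\phi(x) = \beta(L \cap (K+x)), \qquad \psi(x) = \beta(S \cap (T-x)),
\]
we have $\beta(L'_x) = \lambda + \kappa - \phi(x)$, $\beta(S'_x) = \psi(x)$, $\bbe_{x\in B}\phi = \lambda\kappa$, and $\bbe_{x\in B}\psi = \sigma\tau$. The convolution bound required by case~(2) is automatic for every $x \in B$ and is the whole reason for this choice: the pointwise estimate $L'_x(a) S'_x(b) \leq L(a) S(b) + K(a-x) T(b+x)$, averaged over $a + b = y$ in $B$, gives $L'_x \ast S'_x(y) \leq L\ast S(y) + \bbe_{a\in B} K(a-x) T(y-a+x)$, and the substitution $a \mapsto a+x$ --- legitimate because $B$ is a group --- converts the second term into $K\ast T(y)$.

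\emph{Choosing $x$.} Set $V = \{x\in B : \psi(x) \geq \sigma\tau/2\}$ and $X = \{x\in B : \phi(x) \leq \kappa/2\}$. Markov applied to $\phi$ gives $\beta(X^c) \leq 2\lambda$, and a routine tail bound using $\psi \leq \sigma$ pointwise together with $\bbe\psi = \sigma\tau$ yields $\beta(V) \geq \tau/2$. Hence $\beta(V \cap X) \geq \tau/2 - 2\lambda$, so when $\tau > 4\lambda$ we pick any $x \in V \cap X$ to secure case~(2).

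\emph{Density increment.} In the remaining regime $\tau \leq 4\lambda$, I derive case~(1) by applying Lemma \ref{hbs} with $A = K$, $D = T$, threshold $\eta = c\sqrt{\kappa/\lambda}$, and increment parameter $\nu = \tfrac{1}{32\lambda} - 1$; these choices match the rank bound $\eta^{-2}\log(1/\tau) \asymp \lambda\kappa^{-1}\log(1/\tau)$ and the density-increment factor $\alpha(1+\nu) = \kappa/32\lambda$ of case~(1) exactly. The spectral premise $\sum_{\gamma \in \Delta_\eta(T)} |\widehat{\mathbf K}(\gamma)|^2 \geq \nu\kappa^2 \mu_G(B)$ is extracted from the failure of case~(2) via the identities $\phi(x) = \lambda\kappa + \mathbf L \ast \mathbf K^-(x)$ and $\psi(x) = \sigma\tau + \mathbf S^- \ast \mathbf T(x)$ (obtained by splitting $L = \lambda B + \mathbf L$, etc., and exploiting that $B$ is a subspace): failure of~(2) amounts to the statement that the level sets $\{\mathbf L \ast \mathbf K^- > \kappa(\tfrac12 - \lambda)\}$ and $\{\mathbf S^- \ast \mathbf T < -\sigma\tau/2\}$ together cover $B$, and a Parseval/Chebyshev computation with a Fourier splitting at $\Delta_\eta(T)$ converts this combinatorial statement into the required spectral concentration of $\widehat{\mathbf K}$.

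\emph{Main obstacle.} The delicate point is this last Fourier splitting. The cut-off $\eta \asymp \sqrt{\kappa/\lambda}$ is forced by the target rank increment in case~(1), and one must verify that with exactly this threshold the off-spectrum contribution in Parseval is absorbed into the main term while the on-spectrum contribution carries $\nu \asymp 1/\lambda$ worth of $\widehat{\mathbf K}$-energy. Keeping the constants sharp enough to match the $\kappa/32\lambda$ of case~(1) precisely when $\tau \leq 4\lambda$ is the principal quantitative challenge.
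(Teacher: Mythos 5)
Your first half is exactly the paper's argument: the Katz--Koester transformation $L'_x=L\cup(K+x)$, $S'_x=S\cap(T-x)$, the pointwise bound $L'_x\ast S'_x\le L\ast S+K\ast T$, the Markov bound showing $\{\psi\ge\sigma\tau/2\}$ has density $\ge\tau/2$, and the dichotomy ``either some $x$ works for case (2), or the level set $D:=\{x: L\ast(-K)(x)\ge\kappa/2\}$ has density $\delta\ge\tau/2$.'' (Your side condition $\tau>4\lambda$ is not needed and slightly misframes the dichotomy --- what you actually use later is failure of (2), which forces $\{\psi\ge\sigma\tau/2\}\subset D$ --- but that is presentational.)

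The genuine gap is in the spectral step, in two respects. First, you apply Lemma \ref{hbs} with $D=T$ and aim for concentration of $\widehat{\mathbf K}$ on $\Delta_\eta(T)$. The correct anchor is the symmetry set $D=\mathrm{Sym}_{\kappa/2}(-L,K)$ itself, not $T$: the only correlation you have is $\lvert\langle(-L)\ast\mathbf K,D\rangle_\beta\rvert\ge\kappa\delta(1/2-\lambda)\ge\kappa\delta/4$ (assuming $\lambda<1/4$; otherwise case (1) is trivial with $B'=B$), and Cauchy--Schwarz plus Parseval turn this into $\sum_\gamma\lvert\widehat{\mathbf K}(\gamma)\rvert^2\lvert\widehat D(\gamma)\rvert^2\ge\kappa^2\delta^2\mu_G(B)^3/16\lambda$, where the $1/\lambda$ comes from $\lVert\widehat{(-L)}\rVert_2^2$. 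Splitting this sum at $\Delta_\eta(D)$ with $\eta^2=\kappa/32\lambda$ kills the off-spectrum part via $\lvert\widehat D(\gamma)\rvert<\eta\delta\mu_G(B)$ and $\sum_\gamma\lvert\widehat{\mathbf K}(\gamma)\rvert^2\le\kappa\mu_G(B)$, leaving $\sum_{\gamma\in\Delta_\eta(D)}\lvert\widehat{\mathbf K}(\gamma)\rvert^2\ge\kappa^2\mu_G(B)/32\lambda$; since $\delta\ge\tau/2$ one still gets the $\log(1/\tau)$ in the rank and density bounds. There is no mechanism in your sketch to move this energy onto $\Delta_\eta(T)$: $T$ enters the argument only through guaranteeing that $D$ is large, and the second level set $\{\mathbf S^-\ast\mathbf T<-\sigma\tau/2\}$ plays no role in the Fourier computation at all. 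Second, you explicitly defer the splitting computation as ``the principal quantitative challenge,'' so even granting the wrong anchor the spectral premise of Lemma \ref{hbs} is asserted rather than proved. As it stands the proposal establishes the combinatorial alternative but not the density-increment alternative.
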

\begin{proof}
 For an arbitrary $x\in B$, let $L_x=L\cup(K+x)$ and $S_x=S\cap(T-x)$. By construction,
\[ L_x\ast S_x\leq L\ast S_x+(K+x)\ast S_x\leq L\ast S+K\ast T.\]
Furthermore, we have
\[\beta(L_x)=\lambda+\kappa-\beta(L\cap (K+x))=\lambda+\kappa-L\ast (-K)(x)\]
and $\beta(S_x)=(-S)\ast T(x)$.
Hence if $\lvert\text{Sym}_{\tau\sigma/2}(-S,T)\rvert>\lvert\text{Sym}_{\kappa/2}(L,-K)\rvert$ then we are in the second case by taking $L'=L_{x}$ and $S'=S_x$ for some $x\in \text{Sym}_{\tau\sigma/2}(-S,T)\backslash\text{Sym}_{\kappa/2}(L,-K)$. Note that
\[\beta(\text{Sym}_{\tau\sigma/2}(-S,T))\sigma+\sigma\tau/2\geq\bbe_{x\in B}(-S)\ast T(x)=\sigma\tau,\]
so that $\beta(\text{Sym}_{\tau\sigma/2}(-S,T))\geq\tau/2$. Hence either we are in the second case of the theorem, or we may assume that $D=\text{Sym}_{\kappa/2}(-L,K)$ has density at least $\tau/4$, say. We have
\[\langle (-L)\ast K,D\rangle_{\beta}\geq\kappa\delta/2.\]
Note that
\[\langle (-L)\ast B,D\rangle_{\beta}=\lambda\delta.\]
We may assume that $\lambda<1/4$, say, or else the conclusion is trivial. Hence, by the triangle inequality,
\[\lvert\langle(-L)\ast\mathbf{K},D\rangle_{\beta}\rvert\geq\kappa\delta(1/2-\lambda)\geq\kappa\delta/4.\]
Taking the Fourier transform of the left hand side we see that 
\[\left\lvert\sum_{\gamma\in\widehat{G}}\widehat{(-L)}(\gamma)\widehat{\mathbf{K}}(\gamma)\widehat{D}(\gamma)\right\rvert\geq\kappa\delta\mu_G(B)^2/4.\]
The Cauchy-Schwarz inequality combined with Parseval's identity then gives
\[\sum_{\gamma\in\widehat{G}}\lvert \widehat{\mathbf{K}}(\gamma)\rvert^2\lvert\widehat{D}(\gamma)\rvert^2\geq\kappa^2\delta^2\mu_G(B)^3/16\lambda.\]
Furthermore, if we let $\eta^2=\kappa/32\lambda$ then by Parseval's identity again we have
\[\sum_{\gamma\notin\Delta_\eta(D)}\lvert \widehat{\mathbf{K}}(\gamma)\rvert^2\lvert\widehat{D}(\gamma)\rvert^2\leq\kappa^2\delta^2\mu_G(B)^3/32\lambda.\]
Using the trivial upper bound $\lvert\widehat{D}(\gamma)\rvert\leq\delta\mu_G(B)$ it follows that
\[\sum_{\gamma \in \Delta_\eta(D)}\lvert\widehat{\mathbf{K}}(\gamma)\rvert^2\geq\kappa^2\mu_G(B)/32\lambda.\]

Applying Lemma \ref{hbs}, we have a Bohr set $B'\subset B$ with rank and size as required and $\beta'(K+x)\geq\kappa/32\lambda$ for some $x\in X$. 
\end{proof}
We now apply Lemma \ref{kklemma} iteratively to prove the main theorem of this section. 
\begin{theorem}\label{kkthm}
Let $B$ be a Bohr set and suppose $A_1\subset B$ with density $\alpha_1$ and $A_2,\hdots,A_{k+1}\subset B$ all with density at least $\alpha$. Then either
\begin{enumerate}
 \item  there exists a Bohr set $B'\subset B$ with rank $\mbox{rk}(B')\leq \mbox{rk}(B)+O(\alpha_1^{-1/k}\log(1/\alpha))$, density $\beta(B')\geq\exp(-C\alpha_1^{-1/k}\log(1/\alpha))$ and, for some $x\in B$, we have $\beta'(A_1+x)\geq2\alpha_1$, or
 \item there are sets $L\subset B$ and $S_1,\hdots,S_k\subset B$ with $\lambda\geq2^{-k-6}$ and $\sigma_i\geq\alpha^{C\alpha_1^{-1/k}}$ for $1\leq i\leq k$ such that
\[L\ast S_1\ast\cdots\ast S_k(x)\leq\alpha_1^{-2}A_1\ast A_2\ast\cdots\ast A_{k+1}(x)\]
 for all $x\in G$. 
\end{enumerate}
\end{theorem}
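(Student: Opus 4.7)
The plan is to iterate Lemma \ref{kklemma}. Initialise $L_0=A_1$ and $S_0^{(i)}=A_{i+1}$ for $1\leq i\leq k$, so that $L_0\ast S_0^{(1)}\ast\cdots\ast S_0^{(k)}(x)=A_1\ast A_2\ast\cdots\ast A_{k+1}(x)$, and maintain through the iteration the pointwise inequality
\[
L_j\ast S_j^{(1)}\ast\cdots\ast S_j^{(k)}(x)\leq M_j\cdot A_1\ast A_2\ast\cdots\ast A_{k+1}(x),
\]
with $M_0=1$. At step $j$ I apply Lemma \ref{kklemma} to the pair $(L_{j-1},S_{j-1}^{(i_j)})$, where $i_j$ cycles through $\{1,\ldots,k\}$ in round-robin fashion, for suitable $K,T\subset B$. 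I may assume $\alpha_1<2^{-k-6}$ at the outset, since otherwise case (2) of the theorem holds trivially with $L=A_1$ and $S_i=A_{i+1}$.

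If case (1) of Lemma \ref{kklemma} fires at some step, I exit to case (1) of the theorem. The key point is that, by construction, $K$ can be arranged to be a union of at most $\lambda_{j-1}/\alpha_1$ translates of $A_1$, so the density increment $\beta'(K+x)\geq\kappa/(32\lambda_{j-1})$ supplied by the lemma combines with a pigeonhole argument over these translates to produce some $y$ with $\beta'(A_1+y)\geq 2\alpha_1$, provided $\lambda_{j-1}\leq 1/64$; this is ensured by halting the iteration as soon as $\lambda\geq 2^{-k-6}$, which lies below $1/64$ for $k\geq 1$. Otherwise, case (2) of the lemma provides updates $L_j,S_j^{(i_j)}$ with $\lambda_j\geq\lambda_{j-1}+\kappa_{j-1}/2$ and $\sigma_j^{(i_j)}\geq\tau_{j-1}\sigma_{j-1}^{(i_j)}/2$, along with $L_j\ast S_j^{(i_j)}\leq L_{j-1}\ast S_{j-1}^{(i_j)}+K\ast T$; convolving both sides with $\prod_{i\neq i_j}S_{j-1}^{(i)}$ (which equals $\prod_{i\neq i_j}S_j^{(i)}$ since those factors are unchanged) yields an additive update $M_j=M_{j-1}+(\text{contribution of }K\ast T\ast\prod_{i\neq i_j}S_{j-1}^{(i)})$. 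Crucially, the identity $(K+x)\ast(T-x)=K\ast T$ absorbs the translate that the lemma otherwise inserts into $L_j$, and the pointwise bound $S_{j-1}^{(i)}\leq A_{i+1}$ on the untouched factors permits the extra to be dominated cleanly by a constant multiple of $A_1\ast A_2\ast\cdots\ast A_{k+1}$.

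The main obstacle is to choose $K,T$ at each step so that the iteration simultaneously meets three competing constraints: (i) $\lambda_j$ grows quickly enough that $J=O(\log(1/\alpha_1))$ iterations suffice to reach $\lambda_J\geq 2^{-k-6}$; (ii) since each $S^{(i)}$ is then updated only $J/k$ times, we obtain $\sigma_J^{(i)}\geq(\alpha/2)^{J/k}\geq\alpha^{C\alpha_1^{-1/k}}$ (using the elementary inequality $\log(1/\alpha_1)\leq k\alpha_1^{-1/k}$); and (iii) the cumulative multiplier $M_J\leq\alpha_1^{-2}$. The ``more efficient iterative procedure'' alluded to in the paper is realised precisely by spreading the density losses across the $k$ sets $S^{(i)}$, so that no single $\sigma^{(i)}$ shrinks too fast, while exploiting the slack in the crude bounds $S^{(i)}\leq A_{i+1}$ to absorb the extras against the target convolution. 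The technical crux is to orchestrate the role of $K$ at each step—trading off between a translate of $A_1$ (which underwrites the density doubling in case (1)) and a denser subset comparable to $L_{j-1}$ (which accelerates the growth of $\lambda$)—so that (i), (ii) and (iii) hold simultaneously.
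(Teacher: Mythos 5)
Your proposal has the right ingredients (iterate Lemma \ref{kklemma}, grow $L$ by translates of $A_1$, shrink the $S^{(i)}$, track a multiplier, exit to case (1) by pigeonholing over the translates), but the single-level round-robin does not close up, and the point at which it fails is exactly the one you flag as ``the technical crux'' without resolving. The problem is the multiplier bookkeeping on a \emph{revisit} of an index. Lemma \ref{kklemma} gives $L'\ast S'\leq L\ast S+K\ast T$, and to keep $\lambda$ growing geometrically you must take $K=L_{j-1}$, which by then is a union of many translates of $A_1$. When you convolve the extra term $K\ast T$ with the untouched factors $\prod_{i\neq i_j}S^{(i)}$, you need $L_{j-1}\ast T\ast\prod_{i\neq i_j}S^{(i)}\leq C\cdot A_1\ast\cdots\ast A_{k+1}$ \emph{pointwise}. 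If you take $T=A_{i_j+1}$ (which is what your claimed bound $\sigma^{(i)}\geq(\alpha/2)^{J/k}$ requires), this does not follow from $S^{(i)}\leq A_{i+1}$: a translate of $A_1$ is not pointwise dominated by $A_1$, so $L_{j-1}\ast(\cdots)$ is a sum of \emph{translates} of $A_1\ast(\cdots)$, not a multiple of it; and the maintained invariant $L_{j-1}\ast\prod_i S_{j-1}^{(i)}\leq M_{j-1}A_1\ast\cdots\ast A_{k+1}$ is of no use because replacing the shrunk $S^{(i_j)}$ by the larger $A_{i_j+1}$ goes the wrong way for an upper bound. The alternative $T=S_{j-1}^{(i_j)}$ fixes the multiplier (it just doubles) but then each update \emph{squares} $\sigma^{(i_j)}$ rather than multiplying it by $\alpha/2$, and since $\lambda$ only grows by the factor $3/2$ per step (Lemma \ref{kklemma} gives $\lambda+\kappa/2$ with $\kappa=\lambda$), you end up with $\sigma_i\geq\alpha^{C\alpha_1^{-c/k}}$ for $c=\log 2/\log(3/2)>1$, strictly weaker than the stated $\alpha^{C\alpha_1^{-1/k}}$.

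The paper's proof avoids this with a \emph{two-level} nested iteration rather than a round-robin. The outer loop over $j=1,\dots,k$ introduces one new set $A_{j+1}$ at a time; the inner loop runs for exactly $\lceil\alpha_1^{-1/k}\rceil$ steps with the \emph{fixed} pair $(K,T)=(L_{j-1},A_{j+1})$, maintaining only the two-factor inequality $L_j^{(i)}\ast S_j^{(i)}\leq i\,L_{j-1}\ast A_{j+1}$ (the translate $(K+x)$ is absorbed by intersecting $S$ with $T-x$, so no stray translates appear). Thus $\lambda$ grows \emph{linearly} within each inner loop, increasing by a factor $\alpha_1^{-1/k}/2$ per outer step, and each $S_j$ is built once and never revisited, losing only $(\alpha/2)^{\lceil\alpha_1^{-1/k}\rceil}$. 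Only at the end of each inner loop are the remaining factors attached, at which point the level-$(j-1)$ inductive inequality --- which already accounts for all translates accumulated in $L_{j-1}$ --- converts $L_{j-1}\ast S_1\ast\cdots\ast S_{j-1}\ast A_{j+1}\ast\cdots\ast A_{k+1}$ into $\alpha_1^{-2(j-1)/k}A_1\ast\cdots\ast A_{k+1}$, so each outer step costs only a factor $\lceil\alpha_1^{-1/k}\rceil\leq\alpha_1^{-2/k}$ in the multiplier. You would need to restructure your iteration along these lines (or find a genuinely new way to dominate the extra terms) for the argument to go through.
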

\begin{proof}
Let $L_0=A_1$. We show by induction on $1\leq j\leq k$ that we may either find $X_1,\hdots, X_j\subset B$ such that $L_j=A_1+X_1+\cdots+X_j$ satisfies 
\[\lambda_j\geq\max(\alpha_1\lvert X_1\rvert\cdots\lvert X_j\rvert/2^j,\alpha_1^{1-j/k}/2^j)\]
 and $S_1,\hdots,S_j\subset B$  satisfying $\sigma_i\geq\alpha^{C\alpha_1^{-1/k}}$ for $1\leq i\leq j$ such that
\[L_j\ast S_1\ast\cdots\ast S_j \ast A_{j+2}\ast\cdots\ast A_{k+1}(x)\leq\alpha_1^{-2j/k}A_1\ast A_2 \ast\cdots\ast A_{k+1}(x)\]
 for all $x\in G$, or there is a Bohr set $B'\subset B$ such that $\mbox{rk}(B')\leq \mbox{rk}(B)+O(\alpha_1^{-1/k}\log(1/\alpha))$ and for some $x\in B$ we have  $\beta'(L_{j-1}+x)\geq2\alpha_1\lvert X_1\rvert\cdots\lvert X_{j-1}\rvert$. If we are in the second case for any $1\leq j\leq k$ then we halt the inductive procedure and by the pigeonhole principle we must have, for some $x_i\in X_i$, $\beta'(A_1+x_1+\cdots+x_{j-1}+x)\geq2\alpha_1$, giving us the first case of the theorem. Otherwise, setting $j=k$ and $L=L_k$ gives the second case of the theorem.

We now fix $1\leq j\leq k$ and suppose that we have produced $L_{j-1}$ and $S_1,\hdots,S_{j-1}$ as above. We produce a sequence of sets $L_j^{(i)}, S^{(i)}_j\subset B$ iteratively such that $\lambda_{j-1}i\geq \lambda_j^{(i)}\geq\lambda_{j-1} i/2$, $\sigma^{(i)}_j\geq(\alpha/2)^{i}$, and $L_j^{(i)}\ast S_j^{(i)}\leq iL_{j-1}\ast A_{j+1}$.
We begin the iteration by letting $L_j^{(1)}:=L_{j-1}$ and $S_j^{(1)}:=A_{j+1}$. 

We now repeatedly apply Lemma \ref{kklemma}, with $K=L_{j-1}$ and $T=A_{j+1}$, and note that an examination of the proof shows that we take $L_j^{(i+1)}=L_j^{(i)}\cup(L_{j-1}+x_{i+1})$ for some $x_{i+1}\in B$ so that $L_j^{(i)}=L_{j-1}+X_j^{(i)}$ where $\lvert X_j^{(i)}\rvert\leq i$. If we can continue the process for $i'=\lceil\alpha_1^{-1/k}\rceil$ steps, then we can halt and set $L_j=L_j^{(i')}$, so that $L_j=L_{j-1}+X_j$ where \[\lambda_j\geq\lambda_{j-1}\lceil\alpha_1^{-1/k}\rceil/2\geq\max(\alpha_1\lvert X_1\rvert\cdots\lvert X_j\rvert/2^j,\alpha_1^{1-j/k}/2^j)\]
by inductive hypothesis. Furthermore, if we set $S_j=S_j^{(i')}$ then $\sigma_j\geq(\alpha/2)^{i'}\geq \alpha^{C\alpha_1^{-1/k}}$ and by inductive hypothesis again,
\begin{eqnarray*}L_j\ast \cdots\ast S_j \ast A_{j+2}\ast\cdots\ast A_{k+1}&\leq& \lceil\alpha_1^{-1/k}\rceil L_{j-1}\ast\cdots\ast S_{j-1} \ast A_{j+1}\ast\cdots\ast A_{k+1}\\
 &\leq& \alpha_1^{-2j/k}A_1\ast A_2 \ast\cdots\ast A_{k+1}
\end{eqnarray*}
as required. 

Otherwise, we must halt at some $i'\leq \alpha_1^{-1/k}$ with the first alternative from Lemma \ref{kklemma}, so that there is a Bohr set $B'\subset B$ with rank at most \[\mbox{rk}(B)+O(\lambda_j^{(i')}\lambda_{j-1}^{-1}\log(1/\alpha))\leq\mbox{rk}(B)+O(\alpha_1^{-1/k}\log(1/\alpha))\]
and density $\beta(B')\geq \exp(-C\alpha_1^{-1/k}\log(1/\alpha))$ such that, for some $x\in B$, \[\beta'(L_{j-1}+x)\geq\lambda_{j-1}/32\lambda_j^{(i')}\geq\alpha_1\lvert X_1\rvert\cdots\lvert X_{j-1}\rvert/2^{j+5}\lambda_j^{(i')}\geq 2\alpha_1\lvert X_1\rvert\cdots\lvert X_{j-1}\rvert\]
 since if $\lambda_j^{(i')}>2^{-j-6}>2^{-k-6}$ we may end the entire procedure with the second case of the theorem. This completes the induction.
\end{proof}

\section{The Croot-Sisask probabilistic method}\label{cs}
We now use the new Croot-Sisask method of random sampling from \cite{crootsisask} to find a density increment that can exploit the fact that one set in a convolution is very dense, while allowing us to ignore the density loss in the other sets that results from applying Theorem \ref{kkthm}. The operator $\tau_t$ is defined by $\tau_t(f)(x)=f(x-t)$. The theorem finds a large set $T$ such that the translation operator $\tau_t$ is essentially constant on the convolution $L\ast S_1\ast\cdots\ast S_k$ for all $t\in T$. This set may have no structure initially, but Croot and Sisask observed that we may recover some structure by taking repeated sumsets of $T$. In \cite{sanders1} this was exploited to great effect as in the proof below.

We first state the result from \cite{crootsisask} that we require, before proving an analogue of Corollary 5.2 of \cite{sanders1} for multiple convolutions. The proof given here is very similar, but once again we are able to present a simplified and streamlined version using the subspace structure of our Bohr sets.
 
\begin{theorem}\label{cslemma}
 Let $B$ be an additive group, $S\subset B$ and $f:B\to\bbc$ any function. Then for any $\epsilon>0$ and $p\geq 2$ there is some $T\subset B$ with $\beta(T)\geq \sigma^{C\epsilon^{-2}p}$ such that
\[\| \tau_t(f\ast \mu_{S})-f\ast \mu_S\|_{p(\beta)}\leq\epsilon\| f\|_{p(\beta)}\]
for all $t\in T$.
\end{theorem}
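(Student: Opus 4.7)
The plan is the standard Croot--Sisask random sampling argument.

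First, I would approximate $f\ast\mu_S$ by an empirical average of translates. Since $f\ast\mu_S(x)=\bbe_{y\in S}f(x-y)$, it is natural to sample $\mathbf{y}=(y_1,\ldots,y_k)\in S^k$ uniformly and independently and consider $g_{\mathbf{y}}(x)=k^{-1}\sum_{i=1}^{k}f(x-y_i)$. For each fixed $x\in B$ the variables $Z_i(x)=f(x-y_i)-f\ast\mu_S(x)$ are i.i.d.\ with mean zero, so the Marcinkiewicz--Zygmund inequality (with its sharp constant of order $\sqrt{p}$) yields
\[\bbe_{\mathbf{y}}\abs{g_{\mathbf{y}}(x)-f\ast\mu_S(x)}^{p}\leq\brac{\frac{Cp}{k}}^{p/2}\bbe_{y_1}\abs{Z_1(x)}^{p}.\]
Integrating in $x$ and using $\bbe_{y_1}\|Z_1\|_{p(\beta)}^{p}\leq 2^{p}\|f\|_{p(\beta)}^{p}$ (which follows from translation invariance together with Young's inequality $\|f\ast\mu_S\|_{p(\beta)}\leq\|f\|_{p(\beta)}$), choosing $k=\lceil Cp\epsilon^{-2}\rceil$ for a sufficiently large $C$, and applying Markov's inequality produces a set $\mathcal{G}\subset S^{k}$ of ``good'' tuples with $\abs{\mathcal{G}}\geq\tfrac{1}{2}\abs{S}^{k}$ on which $\|g_{\mathbf{y}}-f\ast\mu_S\|_{p(\beta)}\leq\tfrac{\epsilon}{2}\|f\|_{p(\beta)}$.

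Second, I would extract the required set $T$ by a pigeonhole argument on orbits. The key observation is the identity $\tau_{t}g_{\mathbf{y}}=g_{\mathbf{y}+t\mathbf{1}}$, with $\mathbf{1}=(1,\ldots,1)$: when both $\mathbf{y}$ and $\mathbf{y}+t\mathbf{1}$ lie in $\mathcal{G}$, the triangle inequality immediately gives $\|\tau_{t}(f\ast\mu_S)-f\ast\mu_S\|_{p(\beta)}\leq\epsilon\|f\|_{p(\beta)}$. I would then partition $B^{k}$ into the $\abs{B}^{k-1}$ orbits of the free diagonal action $\mathbf{y}\mapsto\mathbf{y}+t\mathbf{1}$, and for each orbit representative $\mathbf{y}_{0}$ define $\mathcal{G}(\mathbf{y}_{0})=\{t\in B:\mathbf{y}_{0}+t\mathbf{1}\in\mathcal{G}\}$, noting that $\sum_{\mathbf{y}_{0}}\abs{\mathcal{G}(\mathbf{y}_{0})}=\abs{\mathcal{G}}\geq\tfrac{1}{2}\abs{S}^{k}$. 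Some orbit therefore satisfies $\abs{\mathcal{G}(\mathbf{y}_{0})}\geq\tfrac{1}{2}\abs{B}\sigma^{k}$, and taking $T=\mathcal{G}(\mathbf{y}_{0})-\mathcal{G}(\mathbf{y}_{0})$ gives $\beta(T)\geq\sigma^{k}/2\geq\sigma^{C\epsilon^{-2}p}$ after enlarging the constant, with every $t\in T$ enjoying the claimed property.

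The main obstacle is the first step, which needs the $L^{p}$ version of Marcinkiewicz--Zygmund with constant of order $\sqrt{p}$; a cruder moment estimate of order $p$ or worse would force $k$ to grow too quickly and destroy the exponent in the final density bound. The orbit pigeonhole in the second step is routine by comparison.
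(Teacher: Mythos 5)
Your argument is correct, but note that the paper does not prove this statement at all: it is quoted directly from Croot and Sisask, and your proof is precisely the standard random-sampling argument of that reference (the Marcinkiewicz--Zygmund bound with constant $O(\sqrt{p})$ followed by the orbit pigeonhole and difference-set trick). So there is nothing in the paper to compare against beyond the citation, and your write-up would serve as a faithful self-contained proof of the cited result.
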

This is a global version of Theorem 5.1 from \cite{sanders1}, which we are able to apply here because Bohr sets in $\bbf_q[t]$ are additive groups. When they are only approximately closed under addition, as in $\bbz/N\bbz$, the local theorem of \cite{sanders1} must be used. The following theorem applies this to obtain a useful density increment.
\begin{theorem}\label{csthm}
Let $B$ be a Bohr set, and suppose $A,L,S_1,\hdots,S_k\subset B$. Then either
 \begin{enumerate}
  \item $\langle L\ast S_1\ast\cdots\ast S_k,A\rangle_\beta\geq \lambda\sigma_1\cdots\sigma_k\alpha/2$, or
  \item for any integer $l$ there exists a Bohr set $B'\subset B$ such that
\[\text{rk}(B')\leq \text{rk}(B)+O(\lambda^{-2-1/l}\alpha^{-1/l}l^2\log(1/\alpha)\log(1/\sigma_k)),\]
\[\beta(B')\geq\exp(-C\lambda^{-2-1/l}\alpha^{-1/l}l^2\log(1/\alpha)\log(1/\sigma_k))\]
and, for some $x\in B$, we have $\beta'(A+x)\geq\alpha(1+c\lambda)$.
 \end{enumerate}
\end{theorem}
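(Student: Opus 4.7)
The plan is to assume case~1 fails and deduce case~2 by combining Croot--Sisask with a spectral density-increment argument in the style of Lemma~\ref{hbs}. Since $B$ is a subspace and $L,S_1,\ldots,S_k\subset B$, we have $\bbe(L\ast S_1\ast\cdots\ast S_k)=\lambda\sigma_1\cdots\sigma_k$, and the failure of case~1 yields
\[
|\langle L\ast S_1\ast\cdots\ast S_k,\mathbf{A}\rangle_\beta|\geq\tfrac12\lambda\sigma_1\cdots\sigma_k\alpha.
\]
Dividing by $\sigma_1\cdots\sigma_k$ and setting $h:=L\ast\mu_{S_1}\ast\cdots\ast\mu_{S_k}$ and $g:=L\ast\mu_{S_1}\ast\cdots\ast\mu_{S_{k-1}}$, we have $h=g\ast\mu_{S_k}$, $\|h\|_\infty,\|g\|_\infty\leq 1$, $\bbe h=\bbe g=\lambda$ (so $\|h\|_{p(\beta)},\|g\|_{p(\beta)}\leq\lambda^{1/p}$), and $|\langle h,\mathbf{A}\rangle_\beta|\geq\lambda\alpha/2$.

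Next, choose $p:=\lceil\log(1/\alpha)\rceil$ and $\epsilon:=c\lambda^{1-1/p}\alpha^{1/p}/l$ for a small absolute constant $c$, and apply Theorem~\ref{cslemma} to $(g,S_k)$ to obtain $T\subset B$ with $\beta(T)\geq\sigma_k^{C\epsilon^{-2}p}$ and $\|\tau_th-h\|_{p(\beta)}\leq\epsilon\lambda^{1/p}$ for all $t\in T$. A telescoping triangle inequality (using that $\tau$ is an $L^p(\beta)$-isometry since $B$ is a subspace) gives $\|\tau_{t_1+\cdots+t_l}h-h\|_{p(\beta)}\leq l\epsilon\lambda^{1/p}$ for $t_i\in T$, and averaging yields $\|h\ast\mu_T^{*l}-h\|_{p(\beta)}\leq l\epsilon\lambda^{1/p}$. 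With $p':=p/(p-1)$ and $\|\mathbf{A}\|_{p'(\beta)}\leq\alpha^{1/p'}$, H\"older then gives
\[
|\langle h\ast\mu_T^{*l},\mathbf{A}\rangle_\beta-\langle h,\mathbf{A}\rangle_\beta|\leq l\epsilon\lambda^{1/p}\alpha^{1/p'}\leq\lambda\alpha/4,
\]
so $|\langle h\ast\mu_T^{*l},\mathbf{A}\rangle_\beta|\geq\lambda\alpha/4$.

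Writing this inner product via Parseval as $\mu_G(B)^{-1}\sum_\xi\widehat h(\xi)\widehat{\mu_T}(\xi)^l\overline{\widehat{\mathbf{A}}(\xi)}$ and splitting at $\Delta_\eta(T)$ with $\eta:=c(\lambda\alpha)^{1/(2l)}$, the tail $\xi\notin\Delta_\eta(T)$ satisfies $|\widehat{\mu_T}(\xi)|^l\leq\eta^l$, and is bounded by Cauchy--Schwarz and Parseval by $\eta^l\sqrt{\lambda\alpha}\leq\lambda\alpha/8$. The surviving part over $\Delta_\eta(T)$ is then $\geq\lambda\alpha/8$, and a final Cauchy--Schwarz together with $\sum_\xi|\widehat h(\xi)|^2\leq\lambda\mu_G(B)$ produces the spectral concentration
\[
\sum_{\xi\in\Delta_\eta(T)}|\widehat{\mathbf{A}}(\xi)|^2\gg\lambda\alpha^2\mu_G(B).
\]

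I now feed this into Lemma~\ref{hbs} with $D=T$, the above $\eta$, and $\nu\asymp\lambda$, which produces a Bohr set $B'\subset B$ with $\|A\ast\beta'\|_\infty\geq\alpha(1+c\lambda)$ and whose rank and density are governed by $\eta^{-2}\log(1/\beta(T))$. Substituting $\eta^{-2}\asymp(\lambda\alpha)^{-1/l}$ and $\log(1/\beta(T))\ll\epsilon^{-2}p\log(1/\sigma_k)\ll p\,l^2\lambda^{2/p-2}\alpha^{-2/p}\log(1/\sigma_k)$, the choice $p=\lceil\log(1/\alpha)\rceil$ renders both $\alpha^{-2/p}$ and $\lambda^{2/p}$ absolute constants, and the product gives exactly $O(\lambda^{-2-1/l}\alpha^{-1/l}l^2\log(1/\alpha)\log(1/\sigma_k))$ as required. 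The main obstacle is the simultaneous balancing of $p$, $\epsilon$, and $\eta$: the Fourier tail forces $\eta^{-2}\asymp(\lambda\alpha)^{-1/l}$, while the rank further acquires an $\epsilon^{-2}p$ factor scaling like $\alpha^{-2/p}$, and only the logarithmic choice $p\asymp\log(1/\alpha)$ converts this potentially polynomial dependence on $\alpha^{-1}$ into the mild $\alpha^{-1/l}$ appearing in the statement.
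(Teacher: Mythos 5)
Your proposal is correct and follows essentially the same route as the paper's proof: Croot--Sisask applied to $L\ast\mu_{S_1}\ast\cdots\ast\mu_{S_{k-1}}$ with $S_k$, telescoping to $lT$, H\"older with $p=\lceil\log(1/\alpha)\rceil$ and $\epsilon\asymp\lambda/l$, a Fourier tail split at $\eta\asymp(\lambda\alpha)^{1/2l}$, and then Lemma \ref{hbs} with $D=T$ and $\nu\asymp\lambda$. The only differences are cosmetic (normalising out $\sigma_1\cdots\sigma_k$ at the outset, passing to the balanced function before rather than after the smoothing, and bounding the tail via $\|h\|_2^2\leq\lambda\mu_G(B)$ instead of the trivial bounds on $\widehat{S_i}$), and the parameter bookkeeping matches the stated rank and density bounds.
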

\begin{proof}
 Applying Theorem \ref{cslemma} with parameters $p$ and $\epsilon$ to be chosen later, where $f=L\ast \mu_{S_1}\ast\cdots\ast \mu_{S_{k-1}}$, there is a set $T\subset B$ such that $\lvert T\rvert\geq\sigma_k^{C\epsilon^{-2}p}\lvert B\rvert$ and
\begin{eqnarray*}
\| \tau_t(L\ast \mu_{S_1}\ast\cdots\ast \mu_{S_k})-L\ast\mu_{S_1}\ast\cdots\ast\mu_{S_k}\|_{p(\beta)}&\leq&\epsilon\|L\ast \mu_{S_1}\ast\cdots\ast \mu_{S_{k-1}}\|_{p(\beta)}\\&\leq& \epsilon\| L\|_{p(\beta)}
\end{eqnarray*}

 for all $t\in T$, or after rescaling,
\[ \|\tau_t(L\ast S_1\ast\cdots\ast S_k)-L\ast S_1\ast\cdots\ast S_k\|_{p(\beta)}\leq\epsilon\sigma_1\cdots\sigma_k\lambda^{1/p}\leq\epsilon\sigma_1\cdots\sigma_k.\]
By the triangle inequality, for any $l\in\bbn$ and $t\in lT$,
\[\|\tau_t(L\ast S_1\ast\cdots\ast S_k)-L\ast S_1\ast\cdots\ast S_k\|_{p(\beta)}\leq l\epsilon\sigma_1\cdots\sigma_k.\]
If we let $g:=\mu_{T}\ast\cdots\ast\mu_{T}$ with $l$ copies of $\mu_{T}$, then after recalling the definition of $\tau_t$ and applying the triangle inequality again we get
\[\| L\ast S_1\ast\cdots\ast S_k\ast g-L\ast S_1\ast\cdots\ast S_k\|_{p(\beta)}\leq l\epsilon\sigma_1\cdots\sigma_k.\]
By H\"{o}lder's inequality,
\begin{eqnarray*}
 \left\lvert \langle L\ast S_1\ast\cdots\ast S_k\ast g,A\rangle_{\beta}-\langle L\ast S_1\ast\cdots\ast S_k,A\rangle_{\beta}\right\rvert&\leq&l\epsilon\sigma_1\cdots\sigma_k\| A\|_{p/p-1(\beta)}\\&\leq&\lambda\sigma_1\cdots\sigma_k\alpha/4
\end{eqnarray*}

if we let $p=\lceil\log(1/\alpha)\rceil$ and $\epsilon=\lambda/4el$.
Hence either we are in the first case, or
\[\langle L\ast S_1\ast\cdots\ast S_k\ast g,A\rangle_{\beta}\leq3\lambda\sigma_1\cdots\sigma_k\alpha/4.\]
Since $\bbe_{x\in B}g=1$ we have
\[\bbe_{x\in B} L\ast S_1\ast\cdots\ast S_k\ast g(x)=\lambda\sigma_1\cdots\sigma_k.\]
Hence
\[\lvert\langle L\ast S_1\ast\cdots\ast S_k\ast g,\mathbf{A}\rangle_{\beta}\rvert\geq\lambda\sigma_1\cdots\sigma_k\alpha/4.\]
We complete the proof by a standard conversion into Fourier space, as in the proof of Lemma \ref{kklemma}. Taking the Fourier transform of this inequality we see that
\[\left\lvert \sum_{\gamma\in\widehat{G}}\widehat{L}(\gamma)\widehat{S_1}(\gamma)\cdots\widehat{S_k}(\gamma)\widehat{g}(\gamma)\widehat{\mathbf{A}}(\gamma)\right\rvert\geq \lambda\sigma_1\cdots\sigma_k\alpha\mu_G(B)^{k+1}/4.\]
Using the Cauchy-Schwarz inequality combined with Parseval's identity and the trivial bound $\lvert\widehat{S_i}(\gamma)\rvert\leq \sigma_i\mu_G(B)$ gives
\[\sum_{\gamma\in\widehat{G}}\left\lvert\widehat{g}(\gamma)\widehat{\mathbf{A}}(\gamma)\right\rvert^2\geq \lambda\alpha^2\mu_G(B)/16.\]
Recalling the definition of $g$ this is
\[\sum_{\gamma\in\widehat{G}}\left\lvert\widehat{\mu_T}(\gamma)\right\rvert^{2l}\left\lvert\widehat{\mathbf{A}}(\gamma)\right\rvert^2\geq \lambda\alpha^2\mu_G(B)/16.\]
Furthermore, if we let $\eta^{2l}=\lambda\alpha/32$ then, by Parseval's identity,
\[\sum_{\gamma\not\in\Delta_\eta(T)}\left\lvert\widehat{\mu_T}(\gamma)\right\rvert^{2l}\left\lvert\widehat{\mathbf{A}}(\gamma)\right\rvert^2\leq \lambda\alpha^2\mu_G(B)/32.\]
Combining these inequalities we have
\[\sum_{\gamma\in\Delta_\eta(T)}\left\lvert\widehat{\mathbf{A}}(\gamma)\right\rvert^2\geq \sum_{\gamma\in\Delta_\eta(T)}\left\lvert\widehat{\mu_T}(\gamma)\right\rvert^{2l}\left\lvert\widehat{\mathbf{A}}(\gamma)\right\rvert^2\geq \lambda\alpha^2\mu_G(B)/32\]
and the second case follows from Lemma \ref{hbs}.
\end{proof}
\section{Density increment strategy}\label{disection}
We now prove Theorem \ref{mainthm2} using the traditional density increment strategy: we construct a series of Bohr sets $B^{(i)}$ such that at each stage, either $A\cap B^{(i)}$ has many solutions to \eqref{main}, or we may find a new Bohr set $B^{(i+1)}\subset B^{(i)}$ on which the density of $A$ increases. Since the density is always trivially bounded above by $1$, we can only iterate this a bounded number of times before we must end with the first alternative, from which we deduce the existence of many solutions to \eqref{main} in the original set $A$, and hence deduce that $A$ must contain at least one non-trivial solution if $\alpha$ is sufficiently large.

We define  $\Lambda(A)$ by letting $\Lambda(A)\lvert G\rvert^{s-1}$ be the number of solutions $(x_1,\hdots,x_s)$ to \eqref{main} with $x_i\in A$. Note in particular that we include the trivial solutions so that $\Lambda(A)\geq \lvert A\rvert^m\lvert G\rvert^{1-s}$, where $m$ is the genus of \eqref{main}. 

We first combine the results from the previous two sections to obtain a density increment which we then iterate to provide a lower bound for $\Lambda(A)$, from which we deduce Theorem \ref{mainthm2} as a corollary.

\begin{lemma}\label{di}
Let $B$ be a Bohr set and $A_i\subset B$ for $1\leq i\leq s$, all with density at least $\alpha$. Then either
\begin{enumerate}
 \item $\langle A_1\ast A_2\cdots\ast A_{s-1},A_s\rangle_\beta\gg\alpha^{C\alpha^{-1/(s-2)}}$, or
\item we have a Bohr set $B'\subset B$ of rank $\text{rk}(B)+O(\alpha^{-1/(s-2)}(\log(1/\alpha))^4)$ and size 
\[\beta(B')\geq\exp(-C\alpha^{-1/(s-2)}(\log(1/\alpha))^4)\]
such that for some $x\in B$ and $i\in\{1,s\}$ we have $\beta'(A_i+x)\geq\alpha(1+c)$.
\end{enumerate}
\end{lemma}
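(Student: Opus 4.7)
The plan is to combine Theorem \ref{kkthm} with Theorem \ref{csthm}, running the Katz--Koester transformation on the first $s-1$ convolutands and then pairing against $A_s$ via the Croot--Sisask method. At each stage either we obtain the density increment we need, or we pass the buck to the next tool, and if neither tool produces an increment we recover the lower bound on $\langle A_1 \ast \cdots \ast A_{s-1}, A_s\rangle_\beta$ asserted in case (1).

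First I would apply Theorem \ref{kkthm} with parameter $k = s-2$, taking the distinguished set to be $A_1$ (with $\alpha_1 = \alpha$) and the auxiliary sets to be $A_2,\dots,A_{s-1}$. If the first alternative occurs, there is a Bohr set $B'\subset B$ of rank $\mathrm{rk}(B)+O(\alpha^{-1/(s-2)}\log(1/\alpha))$ and density $\exp(-C\alpha^{-1/(s-2)}\log(1/\alpha))$ such that $\beta'(A_1+x)\geq 2\alpha$ for some $x$; this fits inside the bounds in case (2) of the lemma with $i=1$ (with room to spare). Otherwise we obtain sets $L,S_1,\dots,S_{s-2}\subset B$ with $\lambda\geq 2^{-s-4}$ and $\sigma_i\geq \alpha^{C\alpha^{-1/(s-2)}}$ satisfying the pointwise bound
\[
L\ast S_1\ast\cdots\ast S_{s-2}(x) \leq \alpha^{-2}A_1\ast A_2\ast\cdots\ast A_{s-1}(x)
\]
for all $x\in G$.

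Next I would feed these $L,S_1,\dots,S_{s-2}$ together with $A=A_s$ into Theorem \ref{csthm}, choosing the integer parameter $l=\lceil \log(1/\alpha)\rceil$. If the second alternative of Theorem \ref{csthm} fires, we get a Bohr set $B'\subset B$ with a density increment $\beta'(A_s+x)\geq \alpha(1+c\lambda)\geq \alpha(1+c')$ (since $\lambda\geq 2^{-s-4}$ is bounded below in $s$, which we absorb into the constant $c$). Inserting $\lambda\gg 1$, $\log(1/\sigma_{s-2})\ll \alpha^{-1/(s-2)}\log(1/\alpha)$, $\alpha^{-1/l}\leq e$, and $l^2\ll (\log(1/\alpha))^2$ into the rank and density bounds of Theorem \ref{csthm} yields precisely
\[
\mathrm{rk}(B')\leq \mathrm{rk}(B)+O(\alpha^{-1/(s-2)}(\log(1/\alpha))^4),\quad \beta(B')\geq \exp\bigl(-C\alpha^{-1/(s-2)}(\log(1/\alpha))^4\bigr),
\]
which is case (2) of the lemma with $i=s$. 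If instead the first alternative of Theorem \ref{csthm} fires, then
\[
\langle L\ast S_1\ast\cdots\ast S_{s-2}, A_s\rangle_\beta \geq \tfrac12 \lambda\sigma_1\cdots\sigma_{s-2}\alpha,
\]
and multiplying the pointwise bound above by $A_s$ and averaging over $B$ gives
\[
\langle A_1\ast\cdots\ast A_{s-1},A_s\rangle_\beta \geq \tfrac12 \alpha^2 \cdot \lambda\sigma_1\cdots\sigma_{s-2}\alpha \gg \alpha^{3}\cdot \alpha^{C(s-2)\alpha^{-1/(s-2)}} \gg \alpha^{C'\alpha^{-1/(s-2)}},
\]
which is case (1) of the lemma.

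The main obstacle I anticipate is purely bookkeeping: verifying that the choice $l=\lceil\log(1/\alpha)\rceil$ really does equalise the two exponential terms in Theorem \ref{csthm} so that the rank and density bounds collapse to the claimed $\alpha^{-1/(s-2)}(\log(1/\alpha))^4$ shape, and checking that the density increment from the first alternative of Theorem \ref{kkthm} is strictly stronger than the target bound in case (2) of the lemma (so that it is subsumed by the weaker statement we record). Both are routine once the parameter choice is made, but they must be written out carefully so that the constants $c$ in the density increment and $C$ in the rank and density bounds depend only on $s$.
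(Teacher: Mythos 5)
Your proposal is correct and follows essentially the same route as the paper: apply Theorem \ref{kkthm} with $k=s-2$ to either get the increment for $A_1$ or produce $L,S_1,\dots,S_{s-2}$, then feed these with $A_s$ into Theorem \ref{csthm} with $l=\lceil\log(1/\alpha)\rceil$, which yields either the increment for $A_s$ or the lower bound of case (1) after combining with the pointwise domination. The parameter bookkeeping you flag (the $\log^4$ exponent and the subsumption of the stronger $2\alpha$ increment into case (2)) works out exactly as you describe and matches the paper's computation.
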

\begin{proof}
 By Theorem \ref{kkthm}, either we are in the second case of the theorem or there are $L,S_1,\hdots,S_{s-2}\subset B$ with $\lambda>2^{-s-3}$ and $\sigma_i\geq\alpha^{C\alpha^{-1/(s-2)}}$ for $1\leq i\leq s-2$ such that
\[A_1\ast A_2\cdots\ast A_{s-1}\geq\alpha^2 L\ast S_1\ast\cdots\ast S_{s-2}.\]

We may now apply Theorem \ref{csthm} to get that either for any integer $l$ we have a Bohr set $B'\subset B$ such that
\[\mbox{rk}(B')\leq \mbox{rk}(B)+O(\alpha^{-1/(s-2)-1/l}l^2\log^2(1/\alpha))\]
and, for some $x\in B$, we have $\beta'(A_s+x)\geq\alpha(1+c)$, or 
\[\langle L\ast S_1\ast\cdots\ast S_{s-2},A_s\rangle_\beta\geq\sigma_1\cdots\sigma_{s-2}\lambda\alpha/2\gg\alpha^{C\alpha^{-1/(s-2)}}.\]
Combining this with the bound above, we are either in the first or second case of the theorem, after taking a roughly optimal choice of $l=\lceil\log(1/\alpha)\rceil$.
\end{proof}

We now prove the following generalisation of Theorem \ref{mainthm2} using the traditional density increment strategy.
\begin{theorem}\label{big}
Let $s\geq 3$, and suppose that $\mathbf{c}\in\bbf_q[t]^s$ satisfies the condition that \eqref{main} has genus $m\geq 1$. Then there exists a constant $C>0$, depending only on $s$, $q$ and $\ell=\max(\deg c_i)$, such that if $A\subset G$ then
\[\Lambda(A)\geq\exp(-C\alpha^{-1/(s-2)}(\log(1/\alpha))^4)	.\]
\end{theorem}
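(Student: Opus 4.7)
The approach is the standard density increment iteration powered by Lemma \ref{di}. A preliminary pigeonhole over cosets of $G_{N-\ell}$ in $G$ (together with the translation invariance of \eqref{main}) lets me assume $A \subset G_{N-\ell}$ with density $\gg \alpha$, so that the dilates $c_j \cdot A$ all lie in $G$. I will maintain, at stage $i$, a Bohr set $B^{(i)} \subset G$, a common translate $y^{(i)} \in G$, and sets
\[A_j^{(i)} = (c_j \cdot (A - y^{(i)})) \cap B^{(i)}, \qquad 1 \leq j \leq s,\]
each of density at least $\alpha_i$ in $B^{(i)}$, starting from $\alpha_0 \asymp \alpha$. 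The identity $\sum_i c_i = 0$ ensures that any tuple of the $A_j^{(i)}$ satisfying the shifted equation pulls back to a genuine solution of \eqref{main} in $A$.

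At each stage I apply Lemma \ref{di} to $A_1^{(i)}, \ldots, A_s^{(i)}$. In case 1 the bound $\langle A_1^{(i)} \ast \cdots \ast A_{s-1}^{(i)}, A_s^{(i)}\rangle_{\beta^{(i)}} \gg \alpha_i^{C\alpha_i^{-1/(s-2)}}$, combined with the factor $\mu_G(B^{(i)})^{s-1}$ that converts the local inner product into a global count of $s$-tuples, produces the desired lower bound on $\Lambda(A)$ and terminates the iteration. In case 2 I obtain a sub-Bohr set $B^{(i+1)} \subset B^{(i)}$ of rank increase $O(\alpha_i^{-1/(s-2)}(\log(1/\alpha_i))^4)$ and a translate making some $A_j^{(i)}$ dense at level $\alpha_i(1+c)$. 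To resume the iteration I further refine $B^{(i+1)}$ via Lemma \ref{dilatebohr} so that every dilate $c_k \cdot B^{(i+1)}$ remains a Bohr set of only slightly larger rank, then pigeonhole a new translate $y^{(i+1)}$ so that the simultaneous density hypothesis at level $\alpha_{i+1}$ holds for all $j$.

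Since $\alpha_i \geq (1+c)^i \alpha$ and $\alpha_i \leq 1$, the iteration terminates in $O(\log(1/\alpha))$ stages. The rank increments form a geometric series in $\alpha_i^{-1/(s-2)}$ summing to $O(\alpha^{-1/(s-2)}(\log(1/\alpha))^4)$, so the final Bohr set has density at least $\exp(-C\alpha^{-1/(s-2)}(\log(1/\alpha))^4)$ in $G$. Combining this with the case 1 factor $\alpha^{C\alpha^{-1/(s-2)}} = \exp(-C\alpha^{-1/(s-2)}\log(1/\alpha))$, the latter dominated by the Bohr set loss, yields $\Lambda(A) \geq \exp(-C\alpha^{-1/(s-2)}(\log(1/\alpha))^4)$ as required. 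The main obstacle I anticipate is the bookkeeping around simultaneous dilations: per Lemma \ref{di} the direct density increment is only on one of $A_1$ or $A_s$, so transferring this to a simultaneous bound for all $s$ dilates of $A$, while respecting the compatibility conditions of Lemma \ref{dilatebohr} on widths and containments, demands careful averaging and pigeonhole arguments at each stage. These costs are absolute constants absorbed into the implicit constants.
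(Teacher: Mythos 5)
Your proposal is correct and follows essentially the same route as the paper: initial pigeonhole into a truncated subgroup so that dilates stay inside $G$, repeated application of Lemma \ref{di} with Lemma \ref{dilatebohr} guaranteeing the dilated Bohr sets are again Bohr sets, geometric summation of the rank increments, and termination after $O(\log(1/\alpha))$ steps. The one step you leave schematic --- recovering the simultaneous density hypothesis for all $s$ dilates after an increment on a single $A_j$ --- is handled in the paper not by pigeonholing a translate of the original $A$ back to level $\alpha_{i+1}$ (which a straight average cannot give), but by letting $A^{(i+1)}$ itself be the incremented translated-and-dilated copy and then, at the start of the next stage, averaging $\sum_j \widetilde{A}\ast\beta_j$ over the $s$ Bohr sets $B_j=\prod_{k\neq j}c_k\cdot B_{s\ell}$ to conclude that either some dilate already yields a fresh increment or all dilates have density at least $(1-c/2)\widetilde{\alpha}$ on the common set $B'=c_j\cdot B_j$ --- a constant loss absorbed by $(1+c)(1-c/2)\geq(1+c/4)$, exactly the constant-factor bookkeeping you anticipated.
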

\begin{proof}
We shall iteratively construct a sequence of Bohr sets $B^{(i)}$ and sets $A^{(i)}\subset G$, each a subset of some translated and dilated copy of $A$. Let $\text{rk}(B^{(i)})=d^{(i)}$ and $\beta^{(i)}(A^{(i)})=\alpha^{(i)}$. We shall insist that, given $B^{(i)}$ and $A^{(i)}$, we can either find a $B^{(i+1)}$ and $A^{(i+1)}$ such that for some constant $c'>0$ (depending only on $s$, $\ell$ and $q$), the density satisfies $\alpha^{(i+1)}\geq\alpha^{(i)}(1+c')$,
\[d^{(i+1)}\leq d^{(i)}+O(\alpha^{(i)(-1/(s-2))}(\log\alpha^{-(i)})^4),\]
and
\[\mu_{B^{(i)}}(B^{(i+1)})\geq\exp(-C(d^{(i)}+\alpha^{(i)(-1/(s-2))}(\log\alpha^{-(i)})^4)),\]
or we have
\[\Lambda(A)\geq \exp(-Cd^{(i)})\mu_{G}(B^{(i)})^{s-1}\alpha^{C\alpha^{-1/(s-2)}}.\]

We begin the iteration by letting $B^{(1)}=G_{N-s\ell}$ so that $d^{(1)}\leq q^{s\ell}$ and $A^{(1)}=A'\cap B^{(1)}$ where $A'$ is some translate of $A$ chosen by the pigeonhole principle so that $\alpha^{(1)}\geq \alpha$. This is permissible since the conclusion of our theorem is invariant under translation. Since we always have the trivial bound $\alpha^{(i)}\leq 1$, we can iterate this process only $O(\log(1/\alpha))$ times, hence at some $B^{(K)}$, with $K\ll\log(1/\alpha)$, we have the second alternative. Here we have
\begin{eqnarray*}
 d^{(K)}\ll \sum_{i=1}^K\alpha^{(i)(-1/(s-2))}(\log\alpha^{-(i)})^4&\ll&\alpha^{-1/(s-2)}(\log(1/\alpha))^4\sum_{i=0}^K(1+c')^{-i}\\&\ll&\alpha^{-1/(s-2)}(\log(1/\alpha))^4.
\end{eqnarray*}

Similarly,
\[\mu_G(B^{(K)})\geq\exp(-C\alpha^{-1/(s-2)}(\log(1/\alpha))^4)\]
from which the result follows.

 We now discuss the iterative procedure. We are given a Bohr set $B^{(i)}$ and a set $A^{(i)}$. To reduce notation in what follows, let $\widetilde{A}=A^{(i)}\cap B^{(i)}$ and $B=B^{(i)}$. We have $\widetilde{A}\subset B$ with density $\widetilde{\alpha}=\alpha^{(i)}$, where $B$ is a Bohr set with rank $d$ and width $k$. Note that for any $B'\subset B$ we have $\bbe_{x\in B}\widetilde{A}\ast\beta'(x)=\bbe_{x\in B'}\widetilde{A}\ast\beta(x)=\widetilde{\alpha}$. In particular, for $1\leq j\leq s$ define $B_j=c_1\cdots c_{j-1}c_{j+1}\cdots c_s\cdot B_{s\ell}$. Since $B_j\subset B$ for all $1\leq j\leq s$, we have
\[\bbe_{x\in B}(\widetilde{A}\ast\beta_1+\cdots+\widetilde{A}\ast\beta_s)(x)=s\widetilde{\alpha}\]
and in particular there is some $x\in B$ such that
\[\widetilde{A}\ast\beta_1(x)+\cdots+\widetilde{A}\ast\beta_s(x)\geq s\widetilde{\alpha}.\]

Note that if 
\[\widetilde{A}\ast\beta_j(x)\geq\widetilde{\alpha}\left(1+\frac{c}{2(s-1)}\right)\]
for any $1\leq j\leq s$ then we may proceed to the next stage of the iteration, letting $B^{(i+1)}=B_j$ and $A^{(i+1)}=A^{(i)}-x$. Here we use Lemma \ref{dilatebohr} which ensures $B_j$ is a genuine Bohr set, after noting that since we chose $B^{(1)}=G_{N-s\ell}$ the condition $B_{s\ell}\subset G_{N-\deg\{c_1\cdots c_{j-1}c_{j+1}\cdots c_s\}}$ will always be satisfied. Otherwise we must have $\widetilde{A}\ast\beta_j(x)\geq\widetilde{\alpha}\left(1-\frac{c}{2}\right)$ for all $1\leq j\leq s$.

Hence if we let $B'=c_1\cdot B_1=\cdots=c_s\cdot B_s$, and similarly $A_j=c_j(A^{(i)}-x)$, then we have $\beta'(A_j)\geq\widetilde{\alpha}\left(1-\frac{c}{2}\right)$ for $1\leq j\leq s$. By Lemma \ref{dilatebohr} we see that $B'$ is a genuine Bohr set and furthermore $B'\subset B$. We now satisfy the conditions of Lemma \ref{di}; hence either 
\[\langle A_1\ast A_2\cdots\ast A_{s-1},A_s\rangle_{\beta'}\geq \alpha^{C\alpha^{-1/(s-2)}},\]
or there is a Bohr set $B''\subset B'$ of rank $d+O(\alpha^{-1/(s-2)}(\log(1/\alpha))^4)$ and density
\[\beta(B'')=\beta(B')\beta'(B'')\geq\exp(-C(d+\alpha^{-1/(s-2)}(\log(1/\alpha))^4))\]
such that either $A_1$ or $A_s$ has density increment at least $\widetilde{\alpha}(1+c)(1-\frac{c}{2})\geq\widetilde{\alpha}(1+\frac{c}{4})$. In this case we are done, letting $B^{(i+1)}=B''$ and $A^{(i+1)}$ be $A_1$ or $A_s$ as appropriate.

Otherwise, note that
\[\Lambda(A)\geq \mu_G(B')^{s-1}\langle A_1\ast A_2\cdots\ast A_{s-1},A_s\rangle_{\beta'}.\]
Here is where we make essential use of the translation and dilation invariance condition $\sum c_i=0$, for the inner product on the right hand side only counts solutions in a set which may be translated and dilated many times from our original $A$, but each solution can be retranslated and dilated to give a solution in the original set.
Putting this all together and noting the estimate $\beta(B')\geq\exp(-C d)$ we have
\[\Lambda(A)\geq \mu_G(B')^{s-1}\alpha^{C\alpha^{-1/(s-2)}}\geq \exp(-Cd)\mu_G(B)^{s-1}\alpha^{C\alpha^{-1/(s-2)}}\]
as required.
\end{proof}

We finally prove Theorem \ref{mainthm2} as a corollary.
\begin{proof}[of Theorem~{\rm\ref{mainthm2}}]
 Suppose $A\subset G$ with density $\alpha$ has no non-trivial solutions to \eqref{main}, so that $\Lambda(A)=\alpha^m q^{(1+m-s)N}$.
Theorem \ref{big} then implies $N\ll \alpha^{-1/(s-2)}(\log(1/\alpha))^4$. If $\alpha\ll N^{-s+2}$ then we are done; otherwise, we have $\log(1/\alpha)\ll\log N$, and hence
\[\alpha\ll\left(\frac{(\log N)^4}{N}\right)^{s-2}\]
as required.
\end{proof}
\section{The integer case}\label{integercase}

We now sketch how to prove Theorem \ref{mainthm1} using the ideas from previous sections. The results here will be much closer in spirit to \cite{sanders1}, and we focus on explaining the process of translating a proof in $\bbf_q[t]$ to one in $\bbz/N\bbz$. 

Given $\rho>0$ and $\Gamma\subset \bbz/N\bbz$ we define the Bohr set $B_\rho(\Gamma)$ to be
\[B_\rho(\Gamma)=\{ x\in \bbz/N\bbz : \lvert e(\gamma x)-1\rvert<\rho\text{ for all }\gamma\in\Gamma\}\]
where we are now using the character given by $e(x)=\exp(2\pi i x/N)$. As in the $\bbf_q[t]$ case, we define $\text{rk}(B)=\lvert \Gamma\rvert$ and call $\rho$ the width of $B_\rho(\Gamma)$. We have the analogous size estimate $\lvert B_\rho(\Gamma)\rvert\geq \rho^{\lvert \Gamma\rvert}N$. Further discussion and proofs can be found in Chapter 4 of \cite{taovu}.

The reason for the extra technical complications in the $\bbz/N\bbz$ case is that it is no longer true that $B+B=B$ for a Bohr set $B$. In general, the best we can do is $B_\rho(\Gamma)+B_\rho(\Gamma)\subset B_{2\rho}(\Gamma)$, which may be exponentially larger. The key insight here, originating with Bourgain \cite{bourgain1}, is that if we restrict ourselves to instead consider $B+B'$ for some smaller Bohr set $B'\subset B$, and if $B$ is sufficiently well-behaved, then we know that $B+B'$ is {\em mostly} contained within $B$, and this is enough to run the previous sort of arguments while incurring only a small error term.

To make this precise we call a Bohr set $B=B_\rho(\Gamma)$, where $\text{rk}(B)=k$, regular if 
\[\frac{\lvert B\rvert}{1+100k\lvert \eta\rvert}\leq \lvert B_{(1+\eta)\rho}(\Gamma)\rvert\leq (1+100k\lvert \eta\rvert)\lvert B\rvert\]
whenever $\lvert \eta\rvert\leq 1/100k$. Given any Bohr set $B$ we can always find a regular Bohr set that closely approximates it. The following is given as Lemma 4.25 in \cite{taovu}.
\begin{lemma}
 Given a Bohr set $B_\rho(\Gamma)$ there is $\epsilon\in[1/2,1)$ such that $B_{\epsilon\rho}(\Gamma)$ is regular.
\end{lemma}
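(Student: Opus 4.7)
The plan is a standard pigeonhole argument on a logarithmic scale. First, I would reparametrise by setting $F(u)=\log\abs{B_{e^u\rho}(\Gamma)}$ for $u\in[-\log 2,0]$; this is a non-decreasing step function since $B_r(\Gamma)\subset B_{r'}(\Gamma)$ whenever $r\leq r'$. The key quantitative input is the `doubling' estimate $\abs{B_{2r}(\Gamma)}\leq C^k\abs{B_r(\Gamma)}$ for an absolute constant $C$, which may be proved by a Ruzsa-type covering argument (see Chapter 4 of \cite{taovu}); this bounds the total variation $F(0)-F(-\log 2)\leq C_0 k$ for some absolute $C_0$.

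Second, I would translate the regularity condition into a statement about $F$. Writing $u=\log\epsilon$ and $v=\log(1+\eta)$, the bound $\abs{B_{(1+\eta)\epsilon\rho}}\leq(1+100k\abs{\eta})\abs{B_{\epsilon\rho}}$ for $\abs{\eta}\leq 1/100k$ is, using the concavity of $\log(1+x)$ to compare $\log(1+100k\abs{\eta})$ with the linear function $(100k\log 2)\abs{\eta}$, implied by the one-sided Lipschitz estimate
\[
F(u+v)-F(u)\leq Lk\abs{v}\quad\text{for }0<\abs{v}\leq 1/100k,
\]
for a suitable absolute constant $L$; the lower half of the regularity condition is symmetric.

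Third, I would apply the Hardy--Littlewood weak-type maximal inequality to the Lebesgue--Stieltjes measure $dF$ on $[-\log 2,0]$. Since this measure has total mass at most $C_0 k$, the one-sided maximal function $\sup_{0<r\leq 1/100k}(F(u+r)-F(u))/r$ exceeds $Lk$ on a set of Lebesgue measure at most $O(C_0/L)$. Choosing $L$ large enough so that $O(C_0/L)<\tfrac{1}{2}\log 2$, and combining the two one-sided bad sets, there exists $u_0\in[-\log 2,0)$ at which both Lipschitz bounds hold; then $\epsilon=e^{u_0}\in[1/2,1)$ gives the desired regular Bohr set.

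The main obstacle is quantitative bookkeeping: the regularity condition is a \emph{pointwise} constraint on the ratio $\abs{B_{(1+\eta)\epsilon\rho}}/\abs{B_{\epsilon\rho}}$ for all $\abs{\eta}\leq 1/100k$, not merely at the endpoint, so one must verify that the Lipschitz bound with constant $Lk$ really implies the precise linear factor $(1+100k\abs{\eta})$ uniformly in $\eta$. Once the constants are tracked carefully, however, this is routine real analysis on monotone step functions.
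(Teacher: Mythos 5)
The paper does not prove this lemma; it simply cites it as Lemma 4.25 of \cite{taovu}, and your argument is essentially the standard proof of that cited result: pigeonhole on the non-decreasing function $u\mapsto\log\abs{B_{e^u\rho}(\Gamma)}$, whose total increase over a dyadic range is $O(k)$ by the covering/doubling estimate, via a rising-sun (weak-type maximal) inequality. The one point to watch is your phrase ``choosing $L$ large enough'': $L$ cannot be taken arbitrarily large, since step two requires $Lk\abs{v}$ to be dominated by $\log(1+100k\abs{\eta})\geq(100\log 2)k\abs{\eta}$ (with an extra factor of about $2$ on the left-sided case from $\abs{\log(1-\abs{\eta})}\leq 2\abs{\eta}$), forcing $L\leq 50\log 2$ or so; meanwhile the measure bound only needs $L>2C_0/\log 2=4$ when $C_0=\log 4$, so the admissible window for $L$ is nonempty and the argument goes through.
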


Any statement which relies on the subspace structure of Bohr sets $B$ in the previous arguments can then be replaced with an approximate version, incurring an error term dependent on the width parameter $\rho$, which can be controlled by choosing $\rho$ sufficiently small. In many cases this means $\rho$ must depend on $\alpha$, which results in the extra logarithmic factors in Theorem \ref{mainthm1}. For example, the appeal to the identity $\langle A,B\ast\beta'\rangle=\alpha\mu_G(B)$ in the proof of Lemma \ref{hbs} can be replaced by an appeal to the following lemma. 
\begin{lemma}
 If $B=B_\rho(\Gamma)$ is a regular Bohr set with rank $k$ and $B'\subset B_{\epsilon\rho}(\Gamma)$ for some $\epsilon\ll 1/k$ then for any $f:G\to\bbc$ with $\| f\|_\infty\leq 1$ we have
\[\langle f,B\ast\beta'\rangle=\bigg(\bbe_{x\in B}f(x)+O(\epsilon k)\bigg)\mu_G(B).\]
\end{lemma}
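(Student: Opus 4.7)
The plan is to unfold the convolution and use Fubini to reduce the claim to an approximate translation invariance statement for averages of $f$ over $B$. Since $\beta' = \mu_{B'}$ and convolution with a measure is defined globally, one computes directly that
\[\langle f, B\ast\beta'\rangle = \bbe_{y\in B'}\bbe_{x\in G}f(x)B(x-y) = \mu_G(B)\,\bbe_{y\in B'}\,\bbe_{z\in B}f(z+y),\]
so it will suffice to prove $|\bbe_{z\in B}f(z+y) - \bbe_{z\in B}f(z)| \ll \epsilon k$ uniformly for $y\in B'$, and then average over $y$.

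For each fixed $y\in B'$, the bound $\|f\|_\infty\leq 1$ controls the translate difference by the normalised symmetric difference:
\[\left|\bbe_{z\in B}f(z+y) - \bbe_{z\in B}f(z)\right| \leq \frac{|B\triangle (B+y)|}{|B|}.\]
The key geometric observation is the inclusion $B_{(1-\epsilon)\rho}(\Gamma)\subset B\cap(B+y)$ for every $y\in B_{\epsilon\rho}(\Gamma)$. Indeed, for such a $y$ and any $z\in B_{(1-\epsilon)\rho}(\Gamma)$ the triangle inequality gives
\[|e(\gamma(z-y))-1|\leq |e(\gamma z)-1| + |e(\gamma y)-1| < (1-\epsilon)\rho + \epsilon\rho = \rho\]
for all $\gamma\in\Gamma$, so $z-y\in B$, hence $z\in B+y$, while $z\in B$ is trivial. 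Since $|B+y|=|B|$ this forces $|B\triangle(B+y)|\leq 2(|B|-|B_{(1-\epsilon)\rho}(\Gamma)|)$.

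The regularity of $B$, together with the hypothesis $\epsilon\ll 1/k$, now gives $|B_{(1-\epsilon)\rho}(\Gamma)|\geq |B|/(1+100k\epsilon) \geq (1-O(\epsilon k))|B|$, so $|B\triangle(B+y)|/|B| = O(\epsilon k)$ uniformly in $y\in B'\subset B_{\epsilon\rho}(\Gamma)$. Averaging over $y\in B'$ and multiplying through by $\mu_G(B)$ then delivers the claimed estimate. There is no serious obstacle here: the definition of regularity was engineered precisely to make boundary bounds of this form available, and the hypothesis $B'\subset B_{\epsilon\rho}(\Gamma)$ is exactly what is required so that the triangle inequality for the characters places the $y$-translate of $B_{(1-\epsilon)\rho}(\Gamma)$ back inside $B$.
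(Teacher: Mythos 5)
Your proof is correct. It is a mild reorganization of the argument the paper gives rather than a truly different one: both proofs ultimately rest on the containments $B_{(1-\epsilon)\rho}(\Gamma)\subset B\cap(B+y)$ for $y\in B_{\epsilon\rho}(\Gamma)$ together with regularity applied to the annulus $B\setminus B_{(1-\epsilon)\rho}(\Gamma)$. The paper keeps the variable $x$ outside and decomposes the support of $B\ast\beta'$ into $B$, the inner annulus $B\setminus B_{(1-\epsilon)\rho}(\Gamma)$ (where $B\ast\beta'$ may drop below $1$), and the outer annulus $B_{(1+\epsilon)\rho}(\Gamma)\setminus B$, bounding both annuli by regularity; you instead apply Fubini first, reducing to a uniform comparison of $\bbe_{z\in B}f(z+y)$ with $\bbe_{z\in B}f(z)$ via the symmetric difference $\lvert B\triangle(B+y)\rvert$. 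A small dividend of your version is that, because $\lvert B+y\rvert=\lvert B\rvert$, you only need the lower regularity bound at radius $(1-\epsilon)\rho$ and never invoke the outer annulus. All the individual steps check out: the identity $\langle f,B\ast\beta'\rangle=\mu_G(B)\,\bbe_{y\in B'}\bbe_{z\in B}f(z+y)$ follows from the paper's global definition of convolution with a measure, the triangle inequality for $\lvert e(\gamma(z-y))-1\rvert$ uses the symmetry $\lvert e(-\gamma y)-1\rvert=\lvert e(\gamma y)-1\rvert$, and the hypothesis $\epsilon\ll 1/k$ is exactly what licenses the regularity estimate $\lvert B_{(1-\epsilon)\rho}(\Gamma)\rvert\geq\lvert B\rvert/(1+100k\epsilon)$.
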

\begin{proof}
Since $\supp(B\ast\beta')=B+B'\subset B_{(1+\epsilon)\rho}(\Gamma)$, we have
\[\lvert G\rvert\langle f,B\ast\beta'\rangle=\sum_{x\in B_{(1+\epsilon)\rho}(\Gamma)}f(x)B\ast\beta'(x).\]
Furthermore, whenever $x\in B_{(1-\epsilon)\rho}(\Gamma)$, we have $B'+x\subset B$ and hence $B\ast\beta'(x)=1$. It follows that the sum above is
\[\sum_{x\in B}f(x)+\sum_{x\in B\backslash B_{(1-\epsilon)\rho}(\Gamma)}f(x)(B\ast\beta'(x)-1)+\sum_{x\in B_{(1+\epsilon)\rho}(\Gamma)\backslash B}f(x)B\ast\beta'(x).\]
By regularity, however, in each of the second sums there are $O(\epsilon k)\lvert B\rvert$ summands, and the result follows by the triangle inequality.
\end{proof}

We restrict ourselves to stating the main tools needed for the proof of Theorem \ref{mainthm1}, which closely follows the proof of Theorem \ref{mainthm2}. Making the required technical changes is a simple but lengthy matter, and is easily done by comparing the proofs given in the rest of this paper to the proofs for the $s=3$ and $\bbz/N\bbz$ case given in \cite{sanders1}, which works in this more complicated setting from the beginning.

The following are the analogues of Theorems \ref{kkthm} and \ref{csthm} (and also Proposition 4.1 and Corollary 5.2 from \cite{sanders1}) respectively. Note that the only change is that we restrict certain sets to be inside a sub-Bohr set of the main Bohr set, with width suitably restricted.

\begin{theorem}\label{int1}
Let $B_\rho(\Gamma)$ and $B'=B_{\rho'}(\Gamma')$ be regular rank $d$ Bohr sets such that $B'\subset B_{\epsilon\rho}(\Gamma)$. Suppose further that $A_1\subset B$ and $A_2,\hdots,A_{k+1}\subset B'$ all with density at least $\alpha$. Finally, suppose $\epsilon\leq c \alpha_1/d$ and $\epsilon'\leq c\alpha/d$ for some sufficiently small absolute constant $c>0$. Then either
\begin{enumerate}
 \item  there exists a regular Bohr set $B''\subset B'$ such that
\[\text{rk}(B'')\leq\text{rk}(B)+O(\alpha_1^{-1/k}\log(1/\alpha)),\]
\[\beta'(B'')\geq\left(\frac{\alpha_1}{d\log(1/\alpha)}\right)^{C(d+\alpha_1^{-1/k}\log(1/\alpha))}\]
and, for some $x\in B$, we have $\beta''(A_1+x)\geq2\alpha_1$, or
 \item there are sets $L\subset B$ and $S_1,\hdots,S_k\subset B''$ with $\lambda\gg 1$ and $\sigma_i\geq\alpha^{C\alpha_1^{-1/k}}$ for $1\leq i\leq k$ such that
\[L\ast S_1\ast\cdots\ast S_k(x)\leq\alpha_1^{-2}A_1\ast A_2\ast\cdots\ast A_{k+1}(x)\]
 for all $x\in G$, where all convolutions are taken over the measure $\beta'$. 
\end{enumerate}
\end{theorem}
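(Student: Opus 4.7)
The plan is to mirror the proof of Theorem \ref{kkthm} while accommodating the fact that Bohr sets in $\bbz/N\bbz$ are not closed under addition, so averages and convolutions involving $B$ must be approximated using regularity at the cost of a width-dependent error. This is exactly why the statement involves the nested pair $B' \subset B_{\epsilon\rho}(\Gamma)$ with $\epsilon \ll \alpha_1/d$, together with an analogous constraint $\epsilon' \ll \alpha/d$ on the further sub-Bohr sets of $B'$ that arise during the proof: these bounds ensure, via the regularity lemma stated in this section, that convolution with $\beta'$ differs from the pointwise value on functions supported in $B$ by only a controllable multiplicative error. Granted these approximations, the overall inductive construction of $L_j = A_1 + X_1 + \cdots + X_j$ together with $S_1,\ldots,S_j$ satisfying
\[L_j \ast S_1 \ast \cdots \ast S_j \ast A_{j+2} \ast \cdots \ast A_{k+1}(x) \leq \alpha_1^{-2j/k}\, A_1 \ast \cdots \ast A_{k+1}(x)\]
carries over verbatim from the proof of Theorem \ref{kkthm}.

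I would first establish the $\bbz/N\bbz$ analogue of Lemma \ref{kklemma}, in which $K,T \subset B$ and $L,S \subset B'$. For $x \in B'$ one again sets $L_x = L \cup (K+x)$ and $S_x = S \cap (T - x)$, which yields the pointwise inequality $L_x \ast S_x \leq L \ast S + K \ast T$ directly, while the density estimates are controlled through the symmetry sets $\mathrm{Sym}_{\kappa/2}(L,-K)$ and $\mathrm{Sym}_{\tau\sigma/2}(-S,T)$. The only substantive change is that regularity, rather than subspace closure, is used to evaluate the averages $\bbe_{x\in B'} L \ast (-K)(x)$ and $\bbe_{x\in B'}(-S) \ast T(x)$; in the density-increment branch one applies Lemma \ref{hbs} to the spectrum $\Delta_\eta(D)$ to obtain a Bohr refinement on which $K+x$ has density $\kappa/32\lambda$, and then passes to a regular sub-Bohr set. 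Passage to regularity costs a further factor polynomial in $d\log(1/\alpha)$, which is the source of the $(\alpha_1/(d\log(1/\alpha)))^{C(d + \alpha_1^{-1/k}\log(1/\alpha))}$ density bound appearing in the statement.

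The iteration then proceeds exactly as in the proof of Theorem \ref{kkthm}, running $\lceil\alpha_1^{-1/k}\rceil$ inner steps within each of the $k$ outer stages and bookkeeping the pointwise convolution bound and the lower bound on the $\lambda_j$ identically. The main obstacle is precisely this bookkeeping of Bohr set parameters: at every density-increment event the rank grows by $O(\alpha_1^{-1/k}\log(1/\alpha))$, and one must then pass to a regular sub-Bohr set whose width is of order $\alpha_1/d'$ times the current one (where $d'$ is the updated rank) so that the hypotheses of the next application of the transformation lemma remain valid. The widths must therefore be chosen in advance, small enough that the innermost regular sub-Bohr set $B''$ of $B'$ obtained during the iteration houses all of the $S_1,\ldots,S_k$ in the second alternative, yet large enough that $\beta'(B'')$ still meets the lower bound claimed in the statement. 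Apart from this careful layered choice of widths — the universal source of extra technical work when translating a proof from $\bbf_q[t]$ to $\bbz/N\bbz$ — the argument is a direct translation of that of Theorem \ref{kkthm}.
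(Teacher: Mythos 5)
Your proposal matches the paper's intended argument: the paper gives no separate proof of Theorem \ref{int1}, saying only that it follows by translating the proof of Theorem \ref{kkthm} through the regularity machinery of Section \ref{integercase} (following \cite{sanders1}), which is exactly what you describe --- including the role of the width conditions $\epsilon\leq c\alpha_1/d$, $\epsilon'\leq c\alpha/d$ in controlling the error from $B+B'\not\subset B$, and the origin of the $\left(\alpha_1/(d\log(1/\alpha))\right)^{C(d+\alpha_1^{-1/k}\log(1/\alpha))}$ density loss in the repeated passage to regular sub-Bohr sets. One small slip to fix when writing out the analogue of Lemma \ref{kklemma}: the containments should be $K,L\subset B$ and $S,T\subset B'$ (the union-set $L$ and the translated pieces $K+x$ live in the wide Bohr set, the shrinking intersection-sets in the narrow one, consistent with the conclusion $L\subset B$, $S_i\subset B''$ and with the application $K=L_{j-1}$, $T=A_{j+1}$), rather than $K,T\subset B$ and $L,S\subset B'$ as written.
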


\begin{theorem}\label{int2}
Let $B_\rho(\Gamma)$ and $B'$ be regular rank $d$ Bohr sets such that $B'\subset B_{\epsilon\rho}(\Gamma)$ and suppose $A,L\subset B$ and $S_1,\hdots,S_k\subset B'$. Suppose further that $\epsilon\leq c\lambda\alpha/d$ for some sufficiently small absolute constant $c>0$. Then either
 \begin{enumerate}
  \item $\langle L\ast S_1\ast\cdots\ast S_k,A\rangle_\beta\gg \lambda\sigma_1\cdots\sigma_k\alpha$, or
  \item for any integer $l$ there exists a regular Bohr set $B''\subset B'$ and an $m$ satisfying
\[m\ll \lambda^{-2-1/2l}\alpha^{-1/2l}l^2\log(1/\alpha)\log(1/\sigma_k)\]
with rank $\text{rk}(B'')\leq \text{rk}(B)+m$, density $\beta'(B'')\geq (1/dm)^{C(d+m)}$ and, for some $x\in B$, we have $\beta''(A+x)\geq\alpha(1+c\lambda)$.
 \end{enumerate}
\end{theorem}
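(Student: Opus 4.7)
The proof runs in parallel to that of Theorem~\ref{csthm}, but using the local version of the Croot-Sisask theorem from~\cite{sanders1} in place of the global Theorem~\ref{cslemma}, and exploiting regularity of Bohr sets to control the error terms that replace the identities available in the subspace setting of $\bbf_q[t]$. The restriction $\epsilon \leq c\lambda\alpha/d$ on the width of $B'$ will be used precisely to make these errors negligible compared to the main term $\lambda\sigma_1\cdots\sigma_k\alpha$.

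The first step is to apply the local Croot-Sisask theorem to $f = L \ast \mu_{S_1} \ast \cdots \ast \mu_{S_{k-1}}$ (with convolutions taken with respect to $\beta$), with parameters $p = \lceil \log(1/\alpha)\rceil$ and a suitable $\epsilon'$ of order $\lambda/l$. This produces a regular Bohr set $\tilde B \subset B'$ and a set $T \subset \tilde B$ with density lower bound $\tilde\beta(T) \gg \sigma_k^{C\epsilon'^{-2}p}$ for which $\tau_t$ approximately fixes $f \ast \mu_{S_k}$ in $L^p(\beta)$. Letting $g = \mu_T^{\ast l}$ and iterating the triangle inequality $l$ times yields
\[
\norm{L\ast S_1\ast\cdots\ast S_k\ast g - L\ast S_1\ast\cdots\ast S_k}_{p(\beta)} \leq l\epsilon'\sigma_1\cdots\sigma_k.
\]
H\"older's inequality together with $\norm{A}_{p/(p-1)(\beta)} \leq 2$ (which follows from $p \sim \log(1/\alpha)$) transfers this to a bound on the difference of inner products against $A$. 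If the first alternative of the theorem fails and we use regularity of $B$ (enforced by the width restriction on $B'$) to see that $\bbe_{x \in B}(L\ast S_1\ast\cdots\ast S_k\ast g)(x) = \lambda\sigma_1\cdots\sigma_k(1+o(1))$, we may replace $A$ with its balanced function and deduce that $\lvert\langle L\ast S_1\ast\cdots\ast S_k\ast g,\mathbf{A}\rangle_\beta\rvert \gg \lambda\sigma_1\cdots\sigma_k\alpha$.

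Next I would take the Fourier transform of this inner product, apply Cauchy-Schwarz with Parseval, and use the trivial bound $\lvert\widehat{S_i}(\gamma)\rvert \leq \sigma_i\mu_G(B)$ to obtain $\sum_\gamma \lvert\widehat{\mu_T}(\gamma)\rvert^{2l}\lvert\widehat{\mathbf{A}}(\gamma)\rvert^2 \gg \lambda\alpha^2\mu_G(B)$. Choosing $\eta$ so that $\eta^{2l}$ is a small multiple of $\lambda\alpha$, the contribution off the spectrum $\Delta_\eta(T)$ is negligible by Parseval, leaving $\sum_{\gamma \in \Delta_\eta(T)}\lvert\widehat{\mathbf{A}}(\gamma)\rvert^2 \gg \lambda\alpha^2\mu_G(B)$. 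A $\bbz/N\bbz$ version of Lemma~\ref{hbs}, combining Chang's lemma with the Heath-Brown-Szemer\'edi-Sanders increment, then yields a regular Bohr set $B'' \subset B'$ whose rank exceeds that of $B$ by $O(\eta^{-2}\log(1/\sigma_k))$. Nesting $B''$ inside an appropriately regularized refinement (losing a further factor of width proportional to $1/dm$) gives the density lower bound $\beta'(B'') \geq (1/dm)^{C(d+m)}$, while $\beta''(A+x) \geq \alpha(1+c\lambda)$ holds for some $x \in B$ by the conclusion of the Lemma~\ref{hbs} analogue.

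The main technical obstacle is the bookkeeping required to keep every Bohr set encountered regular while incurring only multiplicative losses that fit within the claimed bounds: one must carefully choose the intermediate widths at each nesting step (local Croot-Sisask, Chang refinement, regularization) so that the hypothesis $\epsilon \leq c\lambda\alpha/d$ on $B'$ controls all subsequent regularity errors and the final rank increase is of size $m$ as claimed. The slightly weaker exponent $\lambda^{-2-1/2l}\alpha^{-1/2l}$ relative to the $\bbf_q[t]$ case (where one has $\lambda^{-2-1/l}\alpha^{-1/l}$) reflects a more conservative choice of $\eta$ forced by the width parameter appearing inside the density lower bound $(1/dm)^{C(d+m)}$, rather than any substantial new difficulty; the overall shape of the argument is unchanged, and the details may be filled in by following the proof of Proposition~5.1 of~\cite{sanders1} with the multi-convolution generalisation carried out above.
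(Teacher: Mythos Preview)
The paper does not actually give a proof of this theorem: Section~\ref{integercase} explicitly says that it restricts itself to stating the main tools, and that the proofs follow by comparing the $\bbf_q[t]$ arguments to the $\bbz/N\bbz$ treatment in \cite{sanders1}. Your proposal carries out precisely this adaptation of the proof of Theorem~\ref{csthm}, and in fact supplies more detail than the paper does; the overall strategy (local Croot--Sisask, iterate translations, H\"older against $A$, Fourier plus Cauchy--Schwarz, truncate to $\Delta_\eta(T)$, then the $\bbz/N\bbz$ analogue of Lemma~\ref{hbs}) is exactly what the paper intends.

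One small slip worth flagging: you write $\norm{A}_{p/(p-1)(\beta)}\leq 2$, but the relevant bound is $\norm{A}_{p/(p-1)(\beta)}=\alpha^{(p-1)/p}\leq e\alpha$ when $p=\lceil\log(1/\alpha)\rceil$, and it is this factor of $\alpha$ that produces the main term $\lambda\sigma_1\cdots\sigma_k\alpha$; with your stated bound and your choice $\epsilon'\sim\lambda/l$ the arithmetic would not close. This is only a bookkeeping error in a sketch and does not affect the approach.
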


The following is the main density increment lemma needed to prove Theorem \ref{mainthm1}, and is proved by combining Theorems \ref{int1} and \ref{int2} as in the proof of the analogous Lemma \ref{di}.
\begin{lemma}\label{di2}
Let $B_\rho(\Gamma)$ and $B'=B_{\rho'}(\Gamma')$ such that $B'\subset B_{\epsilon\rho}(\Gamma)$. Suppose further that $A_1,A_s\subset B$ and $A_2,\hdots,A_{s-1}\subset B'$ all with density at least $\alpha$. Finally, suppose that $\epsilon\leq c/d\min(\alpha_1,\alpha_s)$ and $\epsilon'\leq c\alpha/d$ for some sufficiently small absolute constant $c>0$. Then either
\begin{enumerate}
 \item $\langle A_1\ast A_2\cdots\ast A_{s-1},A_s\rangle_\beta\gg\alpha^{C\alpha_1^{-1/(s-2)}}\alpha_s$, or
\item we have a Bohr set $B''\subset B'$ such that
\[\text{rk}(B'')\leq\text{rk}(B)+O(\alpha_1^{-1/(s-2)}\log^3(1/\alpha_s)\log(1/\alpha)),\]
\[\beta'(B'')\geq\left(\frac{\alpha_1}{d\log(1/\alpha)}\right)^{C(d+\alpha_1^{-1/(s-2)}\log^3(1/\alpha_s)\log(1/\alpha))}\]
such that for some $x\in B$ and $i\in\{1,s\}$, $\beta''(A_i+x)\geq\alpha(1+c)$ for some absolute constant $c>0$.
\end{enumerate}
\end{lemma}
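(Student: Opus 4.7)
The plan is to chain Theorem \ref{int1} and Theorem \ref{int2} in direct parallel to the proof of Lemma \ref{di}, with the extra care required by the regularity and nesting constraints of Bohr sets in $\bbz/N\bbz$. The roles of the two theorems are identical to the $\bbf_q[t]$ case: first transform the convolution of the dense set $A_1$ with the remaining $A_i$ into a majorization by a convolution involving a much denser set $L$, then use the Croot--Sisask estimate to extract either many solutions or a density increment on $A_s$.

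First, I would apply Theorem \ref{int1} with $k = s-2$ to $A_1 \subset B$ and $A_2, \ldots, A_{s-1} \subset B'$. The hypothesis $\epsilon \leq c\alpha_1/d$ is implied by $\epsilon \leq c/d \min(\alpha_1,\alpha_s)$, and $\epsilon' \leq c\alpha/d$ is directly given. If Theorem \ref{int1}'s first alternative holds, we already obtain case (2) of Lemma \ref{di2} (taking $i = 1$), since the resulting regular Bohr set $B_2 \subset B'$ has rank and density bounds strictly better than those claimed in Lemma \ref{di2}, and $\beta_2(A_1 + x) \geq 2\alpha_1 \geq \alpha(1+c)$. Otherwise we obtain a regular $B_2 \subset B'$ together with $L \subset B$ and $S_1,\ldots,S_{s-2} \subset B_2$ satisfying $\lambda \gg 1$, $\sigma_i \geq \alpha^{C\alpha_1^{-1/(s-2)}}$, and the pointwise majorization
\[L \ast S_1 \ast \cdots \ast S_{s-2}(x) \leq \alpha_1^{-2}\, A_1 \ast A_2 \ast \cdots \ast A_{s-1}(x).\]

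Next, I would apply Theorem \ref{int2} with $B$ and $B_2$ playing the roles of its ``$B$'' and ``$B'$'', with $A = A_s$, and with the integer parameter $l := \lceil \log(1/\alpha_s) \rceil$. The constraint $\epsilon \leq c\lambda\alpha_s/d$ follows from $\lambda \gg 1$ and $\epsilon \leq c\alpha_s/d$, after adjusting the absolute constant. In the first case of Theorem \ref{int2} we have
\[\langle L \ast S_1 \ast \cdots \ast S_{s-2}, A_s \rangle_\beta \gg \lambda \sigma_1 \cdots \sigma_{s-2}\, \alpha_s \gg \alpha^{C\alpha_1^{-1/(s-2)}}\, \alpha_s,\]
and then the majorization above, after absorbing the $\alpha_1^2 \geq \alpha^2$ factor into the exponent (using $\alpha_1^{-1/(s-2)} \geq 1$), yields case (1) of Lemma \ref{di2}. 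In the second case we obtain a regular $B_3 \subset B_2$ with $\beta_3(A_s + x) \geq \alpha_s(1+c\lambda) \geq \alpha(1+c)$ and rank increment
\[m \ll \alpha_s^{-1/(2l)}\, l^2 \log(1/\alpha_s) \log(1/\sigma_{s-2}) \ll \alpha_1^{-1/(s-2)} \log^3(1/\alpha_s) \log(1/\alpha),\]
since $\alpha_s^{-1/(2l)} = O(1)$ by our choice of $l$ and $\log(1/\sigma_{s-2}) \ll \alpha_1^{-1/(s-2)} \log(1/\alpha)$ by Theorem \ref{int1}. Combined with the earlier $O(\alpha_1^{-1/(s-2)}\log(1/\alpha))$ increment from Theorem \ref{int1}, this yields the claimed total rank bound in case (2) of Lemma \ref{di2}.

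The main obstacle is the bookkeeping for the composite density bound. Composing the two density estimates gives
\[\beta'(B_3) = \beta'(B_2)\, \beta_2(B_3) \geq \bigl(\alpha_1/(d\log(1/\alpha))\bigr)^{C(d + \alpha_1^{-1/(s-2)}\log(1/\alpha))}\, (1/(dm))^{C(d+m)},\]
and to absorb this into $\bigl(\alpha_1/(d\log(1/\alpha))\bigr)^{C(d+m)}$ it suffices to verify that $1/(dm) \geq \bigl(\alpha_1/(d\log(1/\alpha))\bigr)^{O(1)}$. This in turn follows from
\[\log(dm) = O\bigl(\log d + \log(1/\alpha_1) + \log\log(1/\alpha_s) + \log\log(1/\alpha)\bigr) = O\bigl(\log(d\log(1/\alpha)/\alpha_1)\bigr),\]
given the polynomial dependence of $m$ on its parameters. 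A minor additional point is that both Theorem \ref{int1} and Theorem \ref{int2} explicitly output regular Bohr sets, so regularity is preserved at each stage of the chain, and the nesting conditions $B_3 \subset B_2 \subset B'$ hold automatically.
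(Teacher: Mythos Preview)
Your proposal is correct and follows precisely the approach the paper indicates: the paper does not give a detailed proof of Lemma~\ref{di2} but simply states that it ``is proved by combining Theorems~\ref{int1} and~\ref{int2} as in the proof of the analogous Lemma~\ref{di}'', and this is exactly what you do, with the appropriate choice $l=\lceil\log(1/\alpha_s)\rceil$ to recover the $\log^3(1/\alpha_s)$ factor in the rank bound. The bookkeeping you supply for the nesting, the $\epsilon$-constraints, and the composite density bound is the routine verification the paper leaves to the reader.
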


Theorem \ref{mainthm1} is then proved by repeated applications of Lemma \ref{di2}, following the strategy of the proof of Theorem \ref{big}.
\section*{acknowledgements}
I would to thank my PhD supervisor Professor Trevor Wooley for pointing me towards this area of research and many useful discussions. I would also like to thank the anonymous referee of this paper for their helpful suggestions and corrections.
\bibliographystyle{amsplain}
\bibliography{MyTrInvEqSan}

\end{document}